\newcommand{\pl}[1]{\foreignlanguage{polish}{#1}}
\theoremstyle{plain}
\newtheorem{theorem}{Theorem}
\newtheorem{conjecture}{Conjecture}
\newtheorem{proposition}{Proposition}[section]
\newtheorem{lemma}[proposition]{Lemma}
\theoremstyle{definition}
\newtheorem{definition}{Definition}[section]
\numberwithin{equation}{section}
\newcounter{thm}
\theoremstyle{plain}
\newcommand{\RR}{\mathbb{R}}
\newcommand{\ZZ}{\mathbb{Z}}
\newcommand{\TT}{\mathbb{T}}
\newcommand{\CC}{\mathbb{C}}
\newcommand{\NN}{\mathbb{N}}
\newcommand{\calF}{\mathcal{F}}
\newcommand{\ind}[1]{{\mathds{1}_{{#1}}}}
\newcommand{\sprod}[2] {{#1 \cdot #2}}
\newcommand{\abs}[1]{{\lvert {#1} \rvert}}
\newcommand{\la}{\lambda}
\title[On the Hardy--Littlewood maximal functions]
{On the Hardy--Littlewood maximal functions in high dimensions: 
Continuous and discrete perspective}
\author{Jean Bourgain}
\address{Jean Bourgain \\
  School of Mathematics\\
  Institute for Advanced Study\\
  Princeton, NJ 08540\\
  USA}
\email{bourgain@math.ias.edu}
\author{Mariusz Mirek}
\address{Mariusz Mirek \\
  Department of Mathematics\\
  Rutgers University\\
Piscataway, NJ 08854\\ USA \&
	Instytut Matematyczny\\
	Uniwersytet \pl{Wroc{\lll}awski}\\
	Plac Grun\-waldzki 2/4\\
	50-384 \pl{Wroc{\lll}aw}\\
	Poland}
\email{mariusz.mirek@rutgers.edu}
\author{Elias M. Stein}
\address{
	Elias M. Stein\\
	Department of Mathematics\\
	Princeton University\\
	Princeton\\
	NJ 08544-100 USA}
\email{stein@math.princeton.edu}
\author{B{\l}a{\.z}ej Wr{\'o}bel}
\address{ B{\l}a{\.z}ej Wr{\'o}bel\\
	Instytut Matematyczny\\
	Uniwersytet \pl{Wroc{\lll}awski}\\
	Plac Grun\-waldzki 2/4\\
	50-384 \pl{Wroc{\lll}aw}\\
	Poland}
\email{blazej.wrobel@math.uni.wroc.pl}
\thanks{ Jean Bourgain was supported by NSF grant DMS-1800640.
Mariusz Mirek was partially supported by the Schmidt Fellowship and
the IAS School of Mathematics and by the National Science Center, Poland grant
DEC-2015/19/B/ST1/01149.  Elias M. Stein was partially supported by
NSF grant DMS-1265524.  B{\l}a{\.z}ej Wr{\'o}bel was partially
supported by the National Science Centre, Poland grant Opus
2018/31/B/ST1/00204}
\begin{document}
 
\selectlanguage{english}

\begin{abstract}
This is a survey article about recent developments in dimension-free estimates for maximal functions corresponding to the Hardy--Littlewood averaging operators  associated with convex symmetric  bodies in $\mathbb R^d$ and $\mathbb Z^d$. 
\end{abstract}

\dedicatory{\Large{Dedicated to Fulvio Ricci on the occasion of his 70th birthday.}}

\maketitle

\section{Introduction}

\label{sec:1}
\subsection{Statement of the results: continuous perspective}
Let $G$  be a convex  symmetric body in $\RR^d$, which is simply a bounded closed and  symmetric convex subset of $\RR^d$ with non-empty interior. In the literature it is usually assumed that a symmetric convex body $G\subset\RR^d$ is  open.
In fact, in $\RR^d$ there is no difference whether we assume $G$ is closed or open, since the boundary of a convex set has Lebesgue measure zero. However, in the discrete case, if $G\cap\ZZ^d$ is considered,  it matters. Therefore, later on  in order to avoid some technicalities,  we will assume that a symmetric convex body $G\subset\RR^d$ is always closed.  

For every $t>0$ and  for every  $x\in\RR^d$ we define the  Hardy--Littlewood averaging operator
\begin{align}
\label{eq:93}
M_t^Gf(x)=\frac{1}{|G_t|}\int_{G_t}f(x-y){\rm d}y \quad \text{for} \quad f\in L^1_{\rm loc}(\RR^d),
\end{align}
where $G_t=\{y\in\RR^d: t^{-1}y\in G\}$ denotes a dilate of the body $G\subset\RR^d$.

For $p\in(1, \infty]$, let $C_p(d, G)>0$ be the best constant
such that   the following maximal inequality 
\begin{align}
\label{eq:109}
   \big\|\sup_{t>0}|M_t^Gf|\big\|_{L^p(\RR^d)}\le C_p(d, G)\|f\|_{L^p(\RR^d)}
\end{align}
holds for every $f\in L^p(\RR^d)$.

The question we shall be concerned with, in this survey, is to decide whether the constant $C_p(d, G)$ can be estimated independently of the dimension $d\in\NN$ for every $p\in(1, \infty]$.  

If $p=\infty$, then \eqref{eq:109} holds with $C_p(d, G)=1$, since $M_t^G$ is an averaging operator. By appealing to a covering argument for $p=1$, and a simple interpolation with $p=\infty$, we can conclude that $C_p(d, G)<\infty$ for every $p\in(1, \infty)$ and for every convex symmetric body $G\subset\RR^d$. However, then the implied upper bound for $C_p(d, G)$ depends on the dimension, since the interpolation with a weak type $(1, 1)$ estimate does not give anything reasonable in these kind of questions, and generally it is better to work with $p\in(1, \infty)$ to obtain any non-trivial result concerning the behavior of $C_p(d, G)$ as $d\to\infty$. 

The problem about estimates of $C_p(d, G)$, as $d\to\infty$, has been extensively studied by several authors for nearly four  decades. The starting point was the work of the third  author \cite{SteinMax}, where, in the case of the Euclidean balls $G=B^2$, it was shown that $C_p(d, B^2)$ is bounded independently of the dimension for every $p\in(1, \infty]$. 
Not long afterwards it was proved by the first author, in \cite{B1} for $p=2$, that $C_p(d, G)$ is bounded by an absolute constant, which is independent of the underlying convex symmetric  body $G\subset\RR^d$. This result was extended in \cite{B2}, and independently by Carbery \cite{Car1}, for all $p\in(3/2, \infty]$. 

It is conjectured that the inequality in \eqref{eq:109} holds for all $p\in(1, \infty]$ and for all convex symmetric bodies $G\subset\RR^d$ with  $C_p(d, G)$  independent of $d\in\NN$. It is reasonable to believe that this is true, since it was
verified for a large class of convex symmetric bodies.

For $q\in[1, \infty]$, let $B^q$ be a $q$-ball in $\RR^d$ defined by 
\begin{align}
\label{eq:88}
\begin{split}
  B^q=\Big\{x\in\RR^d\colon& |x|_q=\Big(\sum_{1\le k\le
  d}|x_k|^q\Big)^{1/q}\le 1\Big\} \quad \text{for} \quad q\in[1, \infty),\\
B^{\infty}&=\{x\in\RR^d\colon|x|_{\infty}=\max_{1\le k\le d}|x_k|\le 1\}.     
\end{split}
\end{align}
For the $q$-balls $G=B^q$  the full range $p\in(1, \infty]$ of dimension-free estimates for $C_p(d, B^q)$ was established by M\"uller  in \cite{Mul1}  (for $q\in [1, \infty)$) and in \cite{B3} (for cubes $q=\infty$) with constants depending only on $q$. More about the current state of the art and papers \cite{B1, B2, B3, Mul1, SteinMax} will be given in Section \ref{sec:2}.

The general case is  beyond our reach at this moment.   However, the approach undertaken in the present article permits us to provide a new simple proof of dimension-free
estimates for the Hardy--Littlewood maximal functions associated with symmetric convex bodies  $G\subset\RR^d$, which independently were the subject of \cite{B2} and \cite{Car1}. We prove the following theorem.
\begin{theorem}
  \label{thm:4}
  Let $p\in(3/2, \infty]$, then there exists a constant $C_p>0$ independent of dimension  $d\in\NN$ and a symmetric convex body $G\subset\RR^d$ such that the constant $C_{p}(d, G)$ defined in \eqref{eq:109} satisfies 
  \begin{align}
    \label{eq:43}
   C_{p}(d, G)\le C_p.
  \end{align}
  Moreover, a dyadic variant of \eqref{eq:43} remains true for  all $p\in(1, \infty]$. More precisely, for every $p\in(1, \infty]$ there exists a constant $C_p>0$ independent of dimension  $d\in\NN$ and a symmetric convex body $G\subset\RR^d$ such that 
  for every  $f\in L^p(\RR^d)$ we have
  \begin{align}
    \label{eq:44}
   \big\|\sup_{n\in\ZZ}|M_{2^n}^Gf|\big\|_{L^p(\RR^d)}\le C_p\|f\|_{L^p(\RR^d)}.
  \end{align}
\end{theorem}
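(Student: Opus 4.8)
The plan is to reduce the general convex-body case to the $\ell^q$-ball cases established by Müller and by \cite{B3}, via a comparison / interpolation scheme, and to treat the dyadic endpoint range $p\in(1,3/2]$ by a separate square-function argument. First I would recall the basic reduction: by John's theorem every symmetric convex body $G\subset\RR^d$ lies between two concentric ellipsoids $E\subset G\subset\sqrt d\,E$, and after a linear change of variables (which commutes with dilations and hence with $\sup_{t>0}M_t^G$ and preserves all $L^p$ norms up to the determinant, which cancels) we may take $E=B^2$. This gives $B^2\subset G\subset\sqrt d\, B^2$, so $M_t^G$ is pointwise comparable to $M_t^{B^2}$ only up to a factor $d^{d/2}$ in the normalization — far too lossy by itself. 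The point of the Bourgain--Carbery approach, which I would reproduce in simplified form, is instead to write $G=\{x: \|x\|_G\le 1\}$ for the Minkowski gauge and to exploit the convexity of $G$ through a careful interpolation between the trivial $L^\infty$ bound and an $L^2$ bound obtained by Fourier analysis: one estimates $\widehat{\mathds 1_{G_t}/|G_t|}$ uniformly in $d$ using only that $G$ is convex and symmetric, the key being a bound of the form $|\widehat{\sigma_G}(\xi)|\lesssim \min(1, |\xi|^{-1})$ after the John normalization, integrated against a Littlewood--Paley decomposition.

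The key steps, in order, would be: (1) normalize $G$ via John's ellipsoid so that $B^2\subseteq G\subseteq\sqrt d\,B^2$; (2) perform a Littlewood--Paley decomposition in the $t$-variable, splitting $\sup_{t>0}M_t^G$ into dyadic pieces $\sup_{2^n\le t<2^{n+1}}$ and the lacunary maximal function $\sup_{n\in\ZZ}M_{2^n}^G$, the latter being exactly \eqref{eq:44}; (3) for the $L^2$ theory, expand in Fourier transform and prove the dimension-free kernel estimate $|\widehat{\mathds 1_{G}/|G|}(\xi)-1|\lesssim |\xi|$ for $|\xi|$ small and $|\widehat{\mathds 1_{G}/|G|}(\xi)|\lesssim |\xi|^{-c}$ for $|\xi|$ large, using the divergence theorem and convexity to control the surface measure of $\partial G$ relative to $|G|$ — this is where the symmetric-convex hypothesis does all the work and where the constant must be shown independent of $d$; (4) sum the resulting square function to get the $L^2$ bound with an absolute constant, then interpolate with the $L^\infty$ bound $C_\infty=1$ to reach $p\in(3/2,\infty]$ as in \cite{B2,Car1}; (5) for the dyadic maximal operator at $p\in(1,3/2]$, use that the lacunary set of dilations allows a vector-valued / $g$-function argument (Rubio de Francia type, or the Cotlar-style comparison to a single average) that upgrades the $L^2$ bound to all $p\in(1,\infty]$ without reproving a weak $(1,1)$ estimate — here one exploits that the dyadic structure eliminates the continuum of scales responsible for the restriction $p>3/2$ in the full maximal function.

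The main obstacle I expect is step (3): obtaining the Fourier-transform (or heat-kernel / semigroup) bounds for the averaging multiplier with constants genuinely independent of $d$. The naive surface-to-volume ratio $|\partial G_t|/|G_t|$ is not dimension-free, so one cannot simply differentiate; instead one must use the convexity more cleverly — e.g. comparing $M_t^G f$ to an average over a thin annular shell, or running a Bakry--Émery / semigroup interpolation — to extract decay in $|\xi|$ uniform in $d$. A secondary difficulty is making the Littlewood--Paley splitting in step (2) genuinely dimension-free: the number of dyadic scales is infinite regardless of $d$, but the per-scale $L^2$ gain $2^{-\delta|n|}$ must have $\delta$ and the implied constant independent of $d$, which again feeds back into the kernel estimates of step (3). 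If those uniform estimates are in hand, steps (1), (2), (4), (5) are essentially soft and the dyadic extension to $p\in(1,3/2]$ in \eqref{eq:44} follows from standard Littlewood--Paley theory applied to the lacunary maximal function.
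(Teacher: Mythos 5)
Your proposal has genuine gaps at precisely the two places where the theorem is hard. First, in your step (4) you claim that interpolating the $L^2$ bound with the trivial $L^\infty$ bound ``reaches $p\in(3/2,\infty]$''. Interpolation between $L^2$ and $L^\infty$ only covers $p\in[2,\infty]$; the entire content of the range $3/2<p<2$ is that one must go \emph{below} $L^2$, and nothing in your sketch produces the exponent $3/2$. In the paper this exponent arises as follows: after the splitting of $\sup_{t>0}|M_t^G f|$ into the dyadic maximal function plus the square function over dyadic blocks, each block is further decomposed by the Rademacher--Menshov type inequality \eqref{eq:24} (replacing Carbery's fractional integration), and for each level $l$ one interpolates an $L^2$ estimate with gain $2^{-l/2}$ --- which comes from the derivative multiplier bound $|\langle\xi,\nabla m^G(\xi)\rangle|\lesssim 1$ of Theorem \ref{prop:1}, an ingredient you never mention --- against an $L^q$ estimate with loss $2^{l}$ coming from the vector-valued dyadic maximal bound; summability over $l$ then forces $p>3/2$. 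Second, your step (5) dismisses the dyadic maximal function for $p$ near $1$ as ``standard Littlewood--Paley theory applied to the lacunary maximal function''. It is not: after comparing $\sup_n|M_{2^n}^Gf|$ with the Poisson maximal function one must bound $\big(\sum_n|(M_{2^n}^G-P_{2^n})S_{j+n}f|^2\big)^{1/2}$ on $L^p$ for $p<2$ with dimension-free constants, and the paper does this by a bootstrap: the vector-valued interpolation produces a bound whose constant involves the unknown operator norm $B_p(N)$ itself, yielding $B_p(N)\lesssim_p 1+(1+B_p(N))^{(2-p)/2}$ and hence $B_p(N)\lesssim_p 1$. Without this (or Carbery's almost-orthogonality principle) the claim for $p\in(1,2)$ does not follow from the $L^2$ theory.

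Two further points. Your opening plan to reduce the general convex body to the $q$-ball results of M\"uller and \cite{B3} cannot work (those proofs use the explicit structure of $B^q$, and there is no comparison principle between a general $G$ and $q$-balls that preserves dimension-free constants); you silently drop it, but it should not be presented as the strategy. And the normalization should be the isotropic position rather than John's ellipsoid: the dimension-free multiplier bounds take the form $|m^G(\xi)|\lesssim (L(G)|\xi|)^{-1}$ and $|m^G(\xi)-1|\lesssim L(G)|\xi|$ with $L(G)$ the isotropic constant, and the proof exploits that these always appear in the scale-invariant combination $\min\{L|\xi|,(L|\xi|)^{-1}\}$; under John's normalization the stated decay $|\widehat{\mathds{1}_G/|G|}(\xi)|\lesssim|\xi|^{-1}$ is not what one proves.
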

The proof of Theorem \ref{thm:4} will be presented in Section \ref{sec:5} using a new flexible approach, which recently resulted in dimension-free bounds in $r$-variational and jump inequalities corresponding to the operators $M_t^G$ from  \eqref{eq:93}, see \cite{BMSW1} and \cite{MSZ1}. An important feature of this method is that it is also applicable to the discrete settings, see \cite{BMSW3} and \cite{MSZ1}. The method is described in Section \ref{sec:3}, the proof of Theorem \ref{thm:4} is given in Section \ref{sec:5}. Our aim  is to continue the investigations in the discrete settings as well. Similar types of questions were recently studied by the authors \cite{BMSW3} for the discrete analogues of the operators $M_t^G$ in $\ZZ^d$.

\subsection{Statement of the results: discrete  perspective}
For every $t>0$ and  for every $x\in\ZZ^d$ we define  the discrete Hardy--Littlewood averaging operator 
\begin{align}
\label{eq:85}
\mathcal M_t^Gf(x)=\frac{1}{|G_t\cap \ZZ^d|}\sum_{y\in G_t\cap\ZZ^d}f(x-y) \quad \text{for} \quad f\in\ell^1(\ZZ^d).
\end{align}  
We note that the operator $\mathcal M_t^G$ is a discrete analogue of $M_t^G$ from \eqref{eq:93}.

For $p\in(1, \infty]$, let $\mathcal C_p(d, G)>0$ be the best constant
such that the following  maximal inequality 
\begin{align}
\label{eq:86}
   \big\|\sup_{t>0}|\mathcal M_t^Gf|\big\|_{\ell^p(\ZZ^d)}\le \mathcal C_p(d, G)\|f\|_{\ell^p(\ZZ^d)}
\end{align}
holds for every $f\in\ell^p(\ZZ^d)$.

Arguing in a similar way as in \eqref{eq:109} we conclude  that $\mathcal C_p(d, G)<\infty$ for every $p\in(1, \infty]$ and for every convex symmetric body $G\subset\RR^d$. The question now is to decide whether $\mathcal C_p(d, G)$ can be bounded independently of the dimension $d$ for every $p\in(1, \infty)$.

In \cite{BMSW3} the authors examined this question in the case of the discrete cubes $B^{\infty}\cap\ZZ^d$, and showed that for every $p\in(3/2, \infty]$ there is a constant $C_p>0$ independent of the dimension such that $\mathcal C_p(d, B^{\infty})\le C_p$. It was also shown in \cite{BMSW3} that if the supremum in \eqref{eq:86} is restricted to the dyadic set $\mathbb D=\{2^n:n\in\NN\cup\{0\}\}$, then \eqref{eq:86} holds for all $p\in(1, \infty]$ and  $\mathcal C_p(d, G)$ is independent of the dimension.

The general case in much more complicated. However, it is not difficult to show  \cite{BMSW3} that for every symmetric convex body $G\subset\RR^d$ there exists $t_{G}>0$ with the property that the norm of the discrete maximal function $\sup_{t>t_{G}}|\mathcal M_t^Gf|$ is controlled by a constant multiple of the norm of its continuous counterpart, and the implied constant is independent of the dimension.
This is a simple comparison argument yielding  dimension-free estimates for $\sup_{t>t_{G}}|\mathcal M_t^Gf|$ as long as the corresponding dimension-free bounds are available for their continuous analogues. As a corollary, for $q$-balls $G=B^q$, if $p\in(1, \infty]$ and $q\in[1, \infty]$, we obtain that there is a constant $C_{p,q}>0$ independent of the dimension $d\in\NN$ such that  for all $f\in\ell^p(\ZZ^d)$ we have
\begin{align}
\label{eq:1}
\big\|\sup_{t\ge d^{1+1/q}}|\mathcal M_t^{B^q} f|\big\|_{\ell^p(\ZZ^d)}
\leq C_{p,q} \|f\|_{\ell^p(\ZZ^d)}.
\end{align}

At this stage, the whole difficulty lies in estimating $\sup_{0<t\le t_{G}}|\mathcal M_t^Gf|$, where the things are
getting more complicated. Nevertheless, as we shall see below, in some cases  improvements are possible.

We show that in the case of $\mathcal M_t^{B^2}$,  which together with $\mathcal M_t^{B^{\infty}}$, is presumably  the most natural setting for the discrete Hardy--Littlewood maximal functions, the range in \eqref{eq:1} can be improved. Namely, the main discrete result of this paper is, an extension of \eqref{eq:1} for $G=B^2$, stated below. 
\begin{theorem}
	\label{thm:3}
	For each $p\in(1,\infty]$ there is a constant $C_p>0$ independent of the dimension 
        $d\in\NN$ such that for every $f\in\ell^p(\ZZ^d)$ we have
        \begin{align}
          \label{eq:7}
          \big\|\sup_{t\ge Cd}|\mathcal M_t^{B^2}f|\big\|_{\ell^p(\ZZ^d)}\le C_p\|f\|_{\ell^p(\ZZ^d)},
        \end{align}
        for an appropriate absolute constant $C>0$.
\end{theorem}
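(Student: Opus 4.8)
The plan is to deduce the theorem from the dimension-free maximal estimate of the third author for Euclidean balls, which holds for \emph{every} $p\in(1,\infty]$, by comparing the discrete averages $\mathcal M_t^{B^2}$ with their Euclidean-space counterparts $M_t^{B^2}$ in the regime $t\ge Cd$. First I would strip off the trivial reductions: for $d$ in any fixed bounded set the estimate is automatic, and by \eqref{eq:1} (with $q=2$) the scales $t\ge d^{3/2}$ are already under control on every $\ell^p(\ZZ^d)$, $p\in(1,\infty]$, uniformly in $d$; so it remains to bound $\sup_{Cd\le t\le d^{3/2}}\abs{\mathcal M_t^{B^2}f}$ on $\ell^p(\ZZ^d)$. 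Splitting this $t$-range dyadically and filling the gaps between consecutive dyadic values by a long/short decomposition --- invoking the dimension-free short-variation estimates of the type developed in \cite{BMSW1,MSZ1,BMSW3} --- reduces matters to the dyadic maximal operator $\sup_{Cd\le 2^n\le d^{3/2}}\abs{\mathcal M_{2^n}^{B^2}f}$.

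Next I would move to the Fourier side. Let $\mathfrak m_t$ be the symbol of $\mathcal M_t^{B^2}$ on $\TT^d$ and $m_t(\xi)=\widehat{\ind{tB^2}}(\xi)/\abs{tB^2}$ the multiplier of $M_t^{B^2}$ on $\RR^d$. Poisson summation applied to $y\mapsto\ind{tB^2}(y)e^{-2\pi i y\cdot\xi}$ yields, for $\xi\in\TT^d$, the periodization identity
\[
N_{B^2}(t)\,\mathfrak m_t(\xi)=\sum_{k\in\ZZ^d}\widehat{\ind{tB^2}}(\xi+k),\qquad N_{B^2}(t)=\abs{tB^2\cap\ZZ^d},
\]
and the key estimate I would aim for is that the aliases ($k\ne 0$) are negligible next to $\widehat{\ind{tB^2}}(0)=\abs{tB^2}$, uniformly in $d$, once $t\ge Cd$ with $C$ a sufficiently large absolute constant; equivalently, $\mathfrak m_t=(1+\delta_t)\,m_t+(\textrm{small})$ on $\TT^d$ with $\abs{\delta_t}\ll 1$. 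Evaluating the identity at $\xi=0$, where $\mathfrak m_t(0)=1$, this in particular forces $N_{B^2}(t)=\abs{tB^2}(1+o(1))$, so the two normalizations agree.

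Granting this comparison, I would write $\mathcal M_t^{B^2}=(1+\delta_t)\widetilde M_t^{B^2}+E_t$, where $\widetilde M_t^{B^2}$ is the de Leeuw transference to $\ell^p(\ZZ^d)$ of $M_t^{B^2}$ (the operator whose symbol is $m_t$ restricted to $\TT^d$) and $E_t$ carries the small remaining symbol $(1+\delta_t)\sum_{k\ne 0}m_t(\cdot+k)$. A transference principle for maximal Fourier multipliers (in the spirit of the comparison argument in \cite{BMSW3}) then bounds $\sup_{t\ge Cd}\abs{\widetilde M_t^{B^2}f}$ on $\ell^p(\ZZ^d)$, $p\in(1,\infty]$, by a constant multiple of the dimension-free $L^p(\RR^d)$ bound for $\sup_{t>0}\abs{M_t^{B^2}}$; the scalar factors $1+\delta_t$ are uniformly bounded and therefore harmless; and $\sup_{t\ge Cd}\abs{E_tf}$ is absorbed --- at the cost of $\varepsilon(C)\vnorm{f}_{\ell^p}$ with $\varepsilon(C)\to 0$ as $C\to\8$ --- by one more long/short decomposition, using the uniform smallness and near-monotonicity in $t$ of the symbol of $E_t$ together with the fact that $E_t$ is, up to a rapidly decaying tail, a difference of two positive averaging operators with matched total mass. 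Choosing $C$ to be a large enough absolute constant finishes the proof for all $p\in(1,\infty]$.

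The hard part is the uniform-in-$d$ control of the aliasing sum, and it is delicate for three intertwined reasons. First, the series $\sum_{k\ne 0}\widehat{\ind{tB^2}}(\xi+k)$ converges only conditionally: from $\widehat{\ind{tB^2}}(\eta)=t^{d/2}\abs{\eta}^{-d/2}J_{d/2}(2\pi t\abs{\eta})$ one has only the pointwise bound $\abs{\widehat{\ind{tB^2}}(\eta)}\lesssim t^{(d-1)/2}\abs{\eta}^{-(d+1)/2}$, which is nowhere near summable over $\ZZ^d$, so one must exploit the oscillation in the Bessel cosine asymptotics (with all constants controlled uniformly in $d$) or replace $\ind{tB^2}$ by a mollification --- and, crucially, only at a \emph{bounded} scale, since smoothing at a scale comparable to $t$ would inflate the $L^1$-error exponentially in $d$. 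Second, mollifying at a bounded scale costs an $L^1$-error of order $\abs{\partial(tB^2)}\sim(d/t)\abs{tB^2}$, which is small relative to $\abs{tB^2}$ precisely when $t\gtrsim d$ --- this is where the threshold $t\asymp d$ is genuinely forced, and it is essentially the best this comparison can do. Third, the small spheres $\{\abs{k}^2=m\}$ carry many lattice points (of order $d^m$ for each fixed $m$), while the Fourier transform of $\ind{tB^2}$ only begins to decay past the frequency scale $\sim t^{-1}\sqrt d$, so for $t=o(d)$ the nearest aliases sit in the non-decaying Bessel-transition range and no dimension-free comparison of $\mathfrak m_t$ with $m_t$ is possible; having enough room to sum the polynomially-many-in-$d$ aliases on each near-origin sphere is exactly what a linear-in-$d$ lower bound on $t$ buys. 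Carrying out these high-dimensional Bessel estimates with dimension-independent constants is the technical core of the whole argument.
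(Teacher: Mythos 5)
Your proposal has a genuine gap: it is a programme whose decisive steps are left unexecuted, and some of the ingredients you invoke are not actually available. First, the reduction from the supremum over $Cd\le t\le d^{3/2}$ to dyadic scales via ``dimension-free short-variation estimates'' for $\mathcal M_t^{B^2}$ does not exist in the sources you cite: for the discrete Euclidean balls the multiplier bounds \eqref{eq:49} (in particular the third one, controlling $\mathfrak m_{N+1}^{B^{2}}-\mathfrak m_{N}^{B^{2}}$) are precisely what is open, as discussed in Section \ref{sec:3}, so this step is either circular or unsupported. Second, the heart of your argument --- the uniform-in-$d$ smallness of the aliasing sum $\sum_{k\ne0}\widehat{\ind{tB^2}}(\xi+k)$ for \emph{all} $\xi\in\TT^d$ once $t\ge Cd$, together with a de Leeuw-type transference for the \emph{maximal} operator and the absorption of $\sup_{t}|E_tf|$ on $\ell^p$ for $p$ close to $1$ --- is never proved; you yourself label it ``the technical core of the whole argument.'' Smallness of a symbol does not by itself control a maximal function on $\ell^p$ near $p=1$, and carrying out high-dimensional Bessel/oscillation estimates with dimension-independent constants is a substantial open-ended task, not a routine verification.

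The paper's proof avoids all of this by staying in physical space and exploiting positivity. One extends $f$ to the step function $F=\sum_{y\in\ZZ^d}f(y)\ind{y+Q}$ and proves the pointwise domination $\mathcal M_N^{B^2}f(x)\lesssim M_{N_1}^{B^2}F(x)\lesssim\int_{x+Q}M_{N_2}^{B^2}F(y)\,{\rm d}y$ for $x\in\ZZ^d$ and $N\ge C_1d$, with $N_1=(N^2+d/4)^{1/2}$ and $N_2=(N_1^2+d/4)^{1/2}$; the only inputs are the lattice-point comparisons $|B_N^2\cap\ZZ^d|\simeq|B_N^2|$ for $N\ge C_1 d$ (Lemmas \ref{lem:1}--\ref{lem:3}, proved via the sub-Gaussian concentration estimate $|\{y\in Q\colon\langle z,y\rangle\ge s\}|\le e^{-7s^2/8}$ on the unit cube) and Stein's dimension-free continuous bound \eqref{eq:8}. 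This comparison is essentially the $\xi=0$ instance of your periodization identity, but positivity makes that single instance suffice: no control of the aliases at nonzero frequencies, no maximal transference theorem, and no error-term maximal function are needed. To salvage your approach you would have to supply complete, dimension-free proofs of the aliasing and transference steps, which is a far harder route than the one the paper takes.
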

The proof of Theorem \ref{thm:3} is based on a delicate refinement of the arguments from \cite{BMSW3}, which in the end reduce the matters to the comparison of the norm of $\sup_{t\ge Cd}|\mathcal M_t^{B^2}f|$ with the norm of its continuous analogue, and consequently to the dimension-free estimates of $C_p(d, B^2)$ for all $p\in(1, \infty]$, that are guaranteed by \cite{SteinMax}. The proof of Theorem \ref{thm:3} is contained in Section \ref{sec:6}.

Surprisingly, as it was shown in \cite{BMSW3}, the dimension-free estimates in the discrete case are not as broad as in the continuous setup and there is no obvious conjecture to prove. This is due to the fact that there exists a simple example of a  convex symmetric body in $\ZZ^d$ for which maximal estimate \eqref{eq:86} on $\ell^p(\ZZ^d)$, for every $p\in(1, \infty)$, involves the smallest constant $\mathcal C_{p}(d, G)>0$ unbounded in $d\in\NN$. In order to carry out the construction it suffices to  fix a sequence  $1\leq \lambda_1<\ldots<\lambda_d<\ldots<\sqrt{2}$ and consider, as in \cite{BMSW3}, the ellipsoid 
\begin{align*}
E(d)=\Big\{x\in \RR^d\colon \sum_{k=1}^d \lambda_k^2x_k^2\,\le 1 \Big\}.
\end{align*}
Then one can prove that for every $p\in(1, \infty)$ there is $C_p>0$ such that for every $d\in\NN$ one has
\begin{align}
\label{eq:91}
\mathcal C_{p}(d, E(d))\ge C_p(\log d)^{1/p}.
\end{align}

Inequality \eqref{eq:91} shows that the dimension-free phenomenon  for the Hardy--Littlewood maximal functions in the discrete setting is much more delicate, and the dimension-free estimates even in the Euclidean case for $\mathcal C_p(d, B^2)$ may be very difficult. However, there is an evidence, gained recently by the authors in \cite{BMSW2}, in favor of the general problem, which makes the things not entirely hopeless.
Namely, in \cite{BMSW2} a dyadic variant of inequality \eqref{eq:86} for $G=B^2$ was studied and we proved the following result. 

\begin{theorem}
\label{thm:0}
For every $p\in[2, \infty]$ there exists a constant $C_p>0$ independent of $d\in\NN$ such that for every 
$f\in\ell^p(\ZZ^d)$  we have
\begin{align}
\label{eq:90}
   \big\|\sup_{t\in\mathbb D}|\mathcal M_t^{B^2}f|\big\|_{\ell^p(\ZZ^d)}\le C_p\|f\|_{\ell^p(\ZZ^d)}.
\end{align}
\end{theorem}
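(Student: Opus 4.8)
The plan is to prove Theorem~\ref{thm:0} by transferring the problem to the continuous setting where the dimension-free bounds of \cite{SteinMax} for $C_p(d,B^2)$ are available, and then handling the ``small scales'' $t\le t_{B^2}\sim d$ by a direct analysis that is clean precisely because the supremum is restricted to the lacunary set $\mathbb D$ and $p\ge 2$. First I would split $\sup_{t\in\mathbb D}|\mathcal M_t^{B^2}f|$ into the large-scale part $\sup_{t\in\mathbb D,\ t\ge Cd}|\mathcal M_t^{B^2}f|$ and the small-scale part $\sup_{t\in\mathbb D,\ t< Cd}|\mathcal M_t^{B^2}f|$. The large-scale part is controlled for all $p\in(1,\infty]$ by the comparison argument already quoted in the excerpt (the one yielding \eqref{eq:1}, and in the Euclidean case refined in Theorem~\ref{thm:3}), so it contributes a dimension-free constant once we invoke dimension-freeness of $C_p(d,B^2)$.

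For the small-scale part, the key point is that there are only $O(\log d)$ dyadic scales $t=2^n$ with $1\le 2^n<Cd$, and on this range I would exploit the $\ell^2$ theory together with interpolation against the trivial $\ell^\infty$ bound. Concretely, for $p=2$ I would use the Fourier transform on $\ZZ^d$ (i.e.\ work on the torus $\TT^d$): write $\widehat{\mathcal M_t^{B^2}f}(\xi)=\mathfrak{m}_t(\xi)\widehat f(\xi)$ where $\mathfrak{m}_t$ is the normalized exponential sum over $B^2_t\cap\ZZ^d$, and bound the maximal multiplier $\sup_{t\in\mathbb D,\ t<Cd}|\mathfrak{m}_t(\xi)|$ by splitting into the region near $\xi=0$, where one compares with the continuous Fourier multiplier for $B^2_t$ and uses the Stein--Tomas / Bessel-type decay that underlies the continuous result, and the complementary region, where the $\ell^2$ norm of the full dyadic square function $\big(\sum_{t\in\mathbb D,\ t<Cd}|\mathcal M_t^{B^2}f|^2\big)^{1/2}$ is bounded by $(\log d)^{1/2}\|f\|_{\ell^2}$ trivially --- but this log loss is exactly what we must avoid, so instead I would establish a $d$-independent bound for $\sup$ by showing the multipliers $\mathfrak{m}_t$, as $t$ ranges over $\mathbb D$ below $Cd$, are comparable to a fixed smooth bump in $\xi$ up to acceptable errors, reducing to a Littlewood--Paley square function estimate that is dimension-free. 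Having $p=2$, I would then interpolate with the $L^\infty$ bound $C_\infty=1$ to obtain all $p\in[2,\infty]$, which is why the theorem is stated only for $p\ge 2$.

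The step I expect to be the main obstacle is the dimension-free control of the small-scale maximal function on $\ell^2$: unlike the continuous case, the discrete averages $\mathcal M_{2^n}^{B^2}$ for $n$ up to $\log_2(Cd)$ see genuine arithmetic/lattice-point fluctuations of $|B^2_t\cap\ZZ^d|$, and a naive square-function bound loses a factor $(\log d)^{1/2}$, which (cf.\ the ellipsoid example \eqref{eq:91}) is not merely an artifact. Overcoming this requires a careful estimate showing that the \emph{jump} between consecutive dyadic scales of the multiplier is summable with a $d$-independent constant --- e.g.\ a numerical-radius / Rademacher-type or a long-jump inequality argument --- or alternatively an asymptotic expansion of the exponential sum $\mathfrak{m}_{2^n}(\xi)$ uniform in $d$ obtained via the local central limit theorem for the uniform measure on $B^2_t\cap\ZZ^d$, comparing it to the Gaussian and hence to the continuous Euclidean multiplier. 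Once such a uniform expansion is in hand, the remainder term is small enough to be absorbed by a crude square function, and the main term is handled by the continuous dimension-free theory of \cite{SteinMax}. Finally I would assemble the two pieces, noting that all implied constants depend only on $p$, which gives \eqref{eq:90}.
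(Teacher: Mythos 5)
There is a genuine gap. Your overall skeleton --- prove the estimate on $\ell^2(\ZZ^d)$ by analyzing the multipliers $\mathfrak m_t^{B^2}$ on $\TT^d$ and then interpolate with the trivial $\ell^\infty$ bound, which is exactly why the range is $p\in[2,\infty]$ --- does agree with the route of \cite{BMSW2} as summarized in Section \ref{sec:3}. But the entire content of Theorem \ref{thm:0} is the dimension-free multiplier estimates themselves, namely the analogues of the first two inequalities in \eqref{eq:49} for $\mathfrak m_N^{B^2}$ with $\kappa_2(d,N)=Nd^{-1/2}$ (the second of which, even in \cite{BMSW2}, only holds with a loss of a negative power of $\kappa_2(d,N)$ --- this is precisely what confines the method to dyadic scales and to $\ell^2$). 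Your proposal never establishes these bounds: it lists candidate strategies (a local CLT expansion, summability of jumps between consecutive dyadic multipliers, comparability to ``a fixed smooth bump''), and explicitly flags this step as the expected obstacle. That obstacle is the theorem; the delicate combinatorial lattice-point analysis of \cite{BMSW2} is exactly what fills it, and nothing in the proposal substitutes for it.

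Two specific steps as written would fail. First, the comparison of $\mathfrak m_t^{B^2}$ with the continuous multiplier of $B^2_t$ near $\xi=0$ via ``Bessel-type decay'' is only available at large scales $t\gtrsim d$ (cf.\ \eqref{eq:1} and Theorem \ref{thm:3}); for $t$ between $1$ and $Cd$ the cardinality $|B^2_t\cap\ZZ^d|$ is not comparable to $|B^2_t|$ and the discrete average is genuinely far from its continuous counterpart, so no transference to \cite{SteinMax} is possible there. Second, the assertion that for $t<Cd$ the multipliers are ``comparable to a fixed smooth bump up to acceptable errors'' is unsubstantiated and is, again, a restatement of what must be proved; the naive square-function bound over the $O(\log d)$ remaining scales loses $(\log d)^{1/2}$, and avoiding that loss requires quantitative decay of $\mathfrak m_{2^n}^{B^2}(\xi)$ in $\xi$ that is uniform in $d$. (As an aside, the ellipsoid lower bound \eqref{eq:91} concerns the full maximal function for a different body and does not indicate that a logarithmic loss is unavoidable here; Theorem \ref{thm:0} shows it is not.)
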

 All the aforementioned results give us strong motivation to understand the situation more generally. In particular,  in the case of $q$-balls $G=B^q$ where $q\in[1, \infty)$, which is well understood in the continuous setup. More about the methods available in the discrete setting is in Section \ref{sec:3}.

\subsection{Notation} Here we fix some further notation and terminology. 
\begin{enumerate}[label*={\arabic*}.]

\item Throughout the whole paper $d\in\NN$  denotes the dimension and
$C>0$ denotes a universal constant, which does not depend on the
dimension, but it may vary from occurrence to occurrence.
\item We write that $A \lesssim_{\delta} B$
($A \gtrsim_{\delta} B$) to say that there is an absolute constant
$C_{\delta}>0$ (which possibly depends on $\delta>0$) such that
$A\le C_{\delta}B$ ($A\ge C_{\delta}B$), and  we  write
$A \simeq_{\delta} B$ when $A \lesssim_{\delta} B$ and
$A\gtrsim_{\delta} B$ hold simultaneously.

\item 
Let $\NN=\{1,2,\ldots\}$ be the set of positive integers let $\NN_0 = \NN\cup\{0\}$, and
let $\mathbb D=\{2^n: n\in\NN_0\}$ denote the set of all dyadic numbers.
We set $\NN_N = \{1, 2, \ldots, N\}$ for any $N \in \NN$.

\item 
The Euclidean space $\RR^d$
is endowed with the standard inner product
\[
x\cdot\xi=\langle x, \xi\rangle=\sum_{k=1}^dx_k\xi_k
  \]
for every $x=(x_1,\ldots, x_d)$ and $\xi=(\xi_1, \ldots,
\xi_d)\in\RR^d$.

\item For a countable set $\mathcal Z$ (usually $\mathcal Z=\ZZ^d$) endowed with the counting measure we shall
write that
\[
\ell^p(\mathcal Z)=\{f:\mathcal Z\to \CC: \|f\|_{\ell^p(\mathcal Z)}<\infty\} \quad \text{for any}\quad p\in[1, \infty],
\]
where for any $p\in[1, \infty)$ we have
\begin{align*}                    
  \|f\|_{\ell^p(\mathcal Z)}=\Big(\sum_{m\in\mathcal Z}|f(m)|^p\Big)^{1/p} \qquad \text{and} \qquad
  \|f\|_{\ell^{\infty}(\mathcal Z)}=\sup_{m\in\mathcal Z}|f(m)|.
\end{align*}

\item We shall abbreviate $\|\cdot\|_{L^p(\RR^d)}$ to $\|\cdot\|_{L^p}$, and  $\|\cdot\|_{\ell^p(\ZZ^d)}$ to $\|\cdot\|_{\ell^p}$.

\item Let $(X, \mathcal B, \mu)$ be a $\sigma$-finite measure space. Let $p\in[1, \infty]$ and suppose that
$(T_t)_{t\in\mathbb I}$ is a family of linear operators such that $T_t$ maps $L^p(X)$ to itself for every $t\in \mathbb I\subseteq (0, \infty)$. Then the corresponding maximal function will be denoted by
\[
T_{*, \mathbb I}f=\sup_{t\in\mathbb I}|T_tf|, \quad \text{for every}\quad f\in L^p(X).
\]
We shall abbreviate $T_{*, \mathbb I}$ to $T_{*}$, if $\mathbb I=(0, \infty)$.
\item Let $(B_1, \|\cdot\|_{B_1})$ and $(B_2, \|\cdot\|_{B_2})$ be Banach spaces. For a linear or sub-linear operator $T: B_1\to B_2$ its norm is defined by
\[
\|T\|_{B_1\to B_2}=\sup_{\|f\|_{B_1}\le1}\|T(f)\|_{B_2}.
\]

\item Let $\mathcal F$ denote the Fourier transform on $\RR^d$ defined for any function 
$f \in L^1\big(\RR^d\big)$ as
\begin{align*}
\calF f(\xi) = \int_{\RR^d} f(x) e^{2\pi i \sprod{\xi}{x}} {\: \rm d}x \quad \text{for any}\quad \xi\in\RR^d.
\end{align*}
If $f \in \ell^1\big(\ZZ^d\big)$ we define the discrete Fourier
transform by setting
\begin{align*}
\hat{f}(\xi) = \sum_{x \in \ZZ^d} f(x) e^{2\pi i \sprod{\xi}{x}} \quad \text{for any}\quad \xi\in\TT^d,
\end{align*}
where $\TT^d\equiv [-1/2, 1/2)^d$ is the $d$-dimensional torus. We shall denote by $\mathcal F^{-1}$ the inverse Fourier transform on $\RR^d$
or the inverse Fourier transform (Fourier coefficient) on the torus $\TT^d$.
This will cause no confusions and the meaning will be always clear from the
context. 
\end{enumerate}

\section*{Acknowledgements}
The authors are grateful to the referees for careful reading of the manuscript and useful remarks
that led to the improvement of the presentation.

\section{A review of the current state of the art}
\label{sec:2}

In the 1980s dimension-free estimates for the Hardy--Littlewood maximal functions over convex symmetric  bodies had begun to be studied \cite{SteinMax, SteinStro} and went through a period of considerable changes and developments \cite{B33, B1, B2, Car1, Mul1}.  However, the dimension-free phenomenon in harmonic analysis had been apparent much earlier, see for instance \cite[Chapter 14, \S 3 in Vol.II]{Zyg}, as well as \cite{Ste1} and the references given there. 
We refer also to more recent results \cite{Ald1, Aub1, B3, BMSW1, BMSW3, BMSW2, IakStr1, MSZ1, Som} and the survey article \cite{DGM1} for a very careful and detailed exposition of the subject.

\subsection{Dimension-free estimates for semigroups}
  Consider the Poisson semigroup $(P_t)_{t\ge0}$ defined  on the Fourier transform side by
  \begin{align*}
  \calF (P_t f)(\xi)=p_t(\xi)\calF (f)(\xi),
  \end{align*}
  for every $t\ge0$ and  $\xi\in\RR^d$, with the symbol 
  \[
p_t(\xi)=e^{-2\pi tL|\xi|},
\]
involving an isotropic constant $L=L(G)>0$ defined in \eqref{eq:iso}. The dilation by the isotropic constant  is a technical assumption, which will simplify our further discussion.

For every $x\in\RR^d$ we introduce the maximal function 
\begin{align*}
P_{*}f(x)=\sup_{t>0}|P_tf(x)|,
\end{align*}
and the square function
\begin{align*}
g(f)(x)=\left(\int_{0}^{\infty}t\Big|\frac{\rm d}{{\rm d}t} P_t
  f(x)\Big|^2{\rm d}t\right)^{1/2},
\end{align*}
associated with the Poisson semigroup. From  \cite{Ste1} we know that for every
$p\in(1, \infty)$ there exists a constant $C_p>0$, which does not depend on
$d\in\NN$, such that for every $f\in L^p(\RR^d)$ we have
\begin{align}
  \label{eq:47'}
\|P_{*}f\|_{L^p}\leq C_p\|f\|_{L^p},
\end{align}
and
\begin{align}
  \label{eq:48}
  \|g(f)\|_{L^p}\leq C_p\|f\|_{L^p}.
  \end{align}
For the proof of \eqref{eq:47'} and \eqref{eq:48} one has to check that $(P_t)_{t\ge0}$ is a symmetric diffusion semigroup in the sense of \cite[Chapter III]{Ste1}. For the convenience of the reader we recall the definition of a symmetric diffusion semigroup from \cite[Chapter III, p.65]{Ste1}.  Let $(X, \mathcal B(X), \mu)$ be a $\sigma$-finite measure space, and $(T_t)_{t\ge0}$ be a strongly continuous semigroup on $L^2(X)$, which maps $L^1(X)+L^{\infty}(X)$ to itself for every $t\ge0$. We say that $(T_t)_{t\ge0}$ is a symmetric diffusion semigroup, if it satisfies for all $t\ge0$ the following conditions: 
\begin{enumerate}[label*={\arabic*}.]
\item[1.] {\it Contraction property:} for all $p\in[1, \infty]$ and $f\in L^p(X)$ we have
$\|T_tf\|_{L^p(X)}\le \|f\|_{L^p(X)}$.
\item[2.] {\it Symmetry property:} each $T_t$ is a self-adjoint operator on $L^2(X)$.
\item[3.] {\it Positivity property:} $T_tf\ge0$, if $f\ge0$.
\item[4.] {\it Conservation property:} $T_t1=1$. 
\end{enumerate}

One major advantage  of using the above-mentioned conditions is that the probabilistic techniques are applicable to understand properties of $T_t$. This is the reason why, in particular, inequalities \eqref{eq:47'} and \eqref{eq:48} hold, see \cite[Chapter III]{Ste1} for more details, and also \cite{Cow} for an even more relaxed conditions.
The semigroup $P_t$ is closely linked to the averaging operator $M_t^G$. Namely, both operators are contractive on  $L^p(\RR^d)$ spaces for all $p\in[1, \infty]$, preserve the class of nonnegative functions, and satisfy $P_t1=M_t^G 1=1$.

Later on, we shall need a  variant of the  Littlewood--Paley inequality. For every $n\in\ZZ$ we define the Poisson projections
$S_n$ by setting
\begin{align*}
S_n=P_{2^{n-1}}-P_{2^n}.  
\end{align*}
 Then, the sequence $(S_n)_{n\in\ZZ}$ is a resolution of the identity on $L^2(\RR^d)$. Namely, we have
 \begin{align}
\label{eq:5}
f=\sum_{n\in \ZZ} S_n f,\quad \text{for every} \quad f\in L^2(\RR^d).   
 \end{align}
Observe that
 \begin{align*}
 S_nf(x)=-\int_{2^{n-1}}^{2^n}\frac{\rm d}{{\rm d}t} P_t f(x){\rm d}t.
 \end{align*}
 Then by the Cauchy--Schwarz inequality we obtain, for every
 $n\in\ZZ$ and $x\in\RR^d$, the following bound
\begin{align*}
|S_nf(x)|^2\leq \bigg(\int_{2^{n-1}}^{2^n}\Big|\frac{\rm d}{{\rm d}t} P_t
  f(x)\Big|{\rm d}t\bigg)^2
  \le 2^{n-1}\int_{2^{n-1}}^{2^n}\Big|\frac{\rm d}{{\rm d}t} P_t
  f(x)\Big|^2{\rm d}t
  \le \int_{2^{n-1}}^{2^n}t\Big|\frac{\rm d}{{\rm d}t} P_t
  f(x)\Big|^2{\rm d}t.
\end{align*}
Now summing over $n\in\ZZ$ and using \eqref{eq:48} one shows that for every $p\in(1, \infty)$, there is a constant $C_p>0$ independent of  $d\in\NN$ such that for every $f\in L^p(\RR^d)$  the following  Littlewood--Paley inequality holds
 \begin{align}
\label{eq:6}
   \Big\|\big(\sum_{n\in \ZZ}|S_n f|^2\big)^{1/2}\Big\|_{L^p}\leq C_p\|f\|_{L^p}.
 \end{align}
Inequality \eqref{eq:6} will play an important role in the proof of Theorem \ref{thm:4}.

We finish this subsection by showing a simple pointwise inequality between the Poisson semigroup and the  Hardy--Littlewood maximal function associated with the Euclidean balls,  which motivates, to some extent, the study of dimension-free estimates for the Hardy--Littlewood maximal functions. Namely, let $K_t$ be the kernel corresponding to $P_t$, assume that $f\ge0$ and observe that
\begin{align*}
P_tf(x)=K_t*f(x)
=
\int_{\mathbb{R}^d} \int_0^{K_t(x-y)} {\rm d}s f(y){\rm d}y
= 
\int_0^{\infty} \int_{ \{y\in\RR^d:K_t (x-y)\ge s \} } f(y) {\rm d}y {\rm d}s.
\end{align*}
The set $\{y\in\RR^d:K_t(x-y)\ge s\}$ is an Euclidean ball centered at $x\in\RR^d$, since $K_1$ is radially decreasing. Thus
\begin{align*}
P_tf(x)=K_t*f(x)
\le 
\bigg(\int_0^{\infty} |\{y\in \RR^d:K_t(x-y)\ge s\}|{\rm d}s\bigg)M_*^{B^2}f(x)
=\|K_t\|_{L^1} M_*^{B^2}f(x).
\end{align*}
Hence we conclude that
\begin{align}
\label{eq:21}
P_* f(x)\le M^{B^2}_{*} f(x). 
\end{align}
Inequality \eqref{eq:47'} gives us a bound independent of the dimension for $\|P_*\|_{L^p\to L^p}$, and in view of \eqref{eq:21} we obtain $\|P_*\|_{L^p\to L^p}\le C_p(d, B^2)$. Now a natural question arises whether  $C_p(d, B^2)$ can be bounded independently of the dimension. This problem was investigated by the third author in \cite{SteinMax}.

\subsection{The case of the Euclidean balls \cite{SteinMax,SteinStro}}
\label{sec:Ste}
The third author obtained in \cite{SteinMax}, see also the joint paper with Str\"omberg \cite{SteinStro} for more details, that for every $p\in(1, \infty]$ there is a constant $C_p>0$ independent of the dimension $d\in\NN$ such that
\begin{align}
\label{eq:36}
C_p(d, B^2)\le C_p.
\end{align}
Let us briefly describe the method used in \cite{SteinMax} to prove \eqref{eq:36}. In $\RR^d$, as $d\to\infty$, most of the mass of the unit ball $B^2$ concentrates at the unit sphere $\mathbb S^{d-1}$ in $\RR^d$. In fact, if $\varepsilon\in(0, 1)$, we have
\begin{align*}
d\int_{0}^{1}r^{d-1}{\rm d}r=1,\qquad \text{while}\qquad \lim_{d\to\infty}d\int_{0}^{1-\varepsilon}r^{d-1}{\rm d}r=0.
\end{align*}
Therefore, the key idea is to use the spherical averaging operator, defined for any $r>0$ and $x\in\RR^d$ by
\begin{align}
\label{eq:42}
{A_r^d} f(x)=\int_{\mathbb S^{d-1}} f(x-r\theta){\rm d}\sigma_{d-1}(\theta),
\end{align}
where $\sigma_{d-1} $ denotes the normalized surface measure on $\mathbb S^{d-1}$. Using polar coordinates one easily sees that
\begin{align*}
M_{t}^{B^2}f(x)=d\int_{0}^{1}r^{d-1}{A_{tr}^d} f(x){\rm d}r,
\end{align*}
which immediately implies
\begin{align}
\label{eq:46}
|M_{*}^{B^2}f(x)|\le |A_{*}^df(x)|.
\end{align}
By the earlier result of the third author \cite{Ste0}, we know that for every $d\ge3$ and for every $p>\frac{d}{d-1}$ there is a constant $C_{d, p}>0$ such that for every $f\in L^p(\RR^d)$ one has
\begin{align}
\label{eq:41}
\|A_{*}^df\|_{L^p}\le C_{d, p} \|f\|_{L^p}.
\end{align}
 Inequality \eqref{eq:41} is also true when $d=2$, but this turned out to be a more difficult result, obtained by the first author in \cite{B0}.
Now, the matters are reduced to show that the best constant in \eqref{eq:41} can be taken to be independent of the dimension. For this purpose, the method of rotations enables one to view high-dimensional spheres as an average of rotated low-dimensional ones, and consequently one can conclude that for every $d\ge3$ and $p>\frac{d}{d-1}$ we have
\begin{align}
\label{eq:45}
\|A_{*}^{d+1}\|_{L^p(\RR^{d+1})\to L^p(\RR^{d+1})}\le \|A_{*}^{d}\|_{L^p(\RR^{d})\to L^p(\RR^{d})}.
\end{align}
Hence the best constant in \eqref{eq:41} is non-increasing, and in particular bounded, in $d\in\NN$.

In order to prove \eqref{eq:36} it suffices to take an integer $d_0>\frac{p}{p-1}$. If $d\le d_0$, then there is nothing to do. If $d>d_0$, taking into account \eqref{eq:46} and \eqref{eq:45}, we see that
\begin{align*}
\|M_{*}^{B^2}f\|_{L^p}
\le \|A_{*}^{d_0}\|_{L^p(\RR^{d_0})\to L^p(\RR^{d_0})}\|f\|_{L^p},
\end{align*}
and we obtain \eqref{eq:36} as claimed.

The method described above is limited to the Euclidean balls. The case of general convex symmetric bodies will require a different approach.   

\subsection{The $L^2$ result for general symmetric bodies via Fourier transform methods \cite{B1} }

In \cite{B1} the first author proposed a different approach, which is based on the estimates of the averaging operators $M_t^G$ on the  Fourier transform side. Before we present the main result from \cite{B1} we have to fix some notation and terminology. We begin with the most important definition of this paper.
\begin{definition}
\label{def:1}
We say that a convex symmetric body $G\subset\RR^d$ is in the {\it isotropic position}, if it has Lebesgue measure $|G|=
1$, and there exists a constant $L=L(G)>0$ depending only on $G$ such that
\begin{equation}
\label{eq:iso}
\int_{G}\langle x, \xi\rangle^2{\rm d}x=L(G)^2|\xi|^2\quad\text{for any}\quad \xi\in \RR^d.
\end{equation}
 The constant $L(G)$ in \eqref{eq:iso}  is called the {\it isotropic constant} of $G$.
\end{definition}
From \eqref{eq:iso} one can deduce the following expression for the isotropic constant
\begin{equation}
\label{eq:iso'}
L(G)^2=\frac{1}{d} \int_{G} |x|^2{\rm d}x.
\end{equation}
\begin{lemma}
\label{lem:7}
For every convex symmetric body $G\subset\RR^d$, there exists a linear transformation $U$ of $\RR^d$ such that $U(G)$ is in the isotropic position.
\end{lemma}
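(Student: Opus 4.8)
The plan is to reduce the general case to the familiar fact that the second-moment (inertia) matrix of a convex body can be diagonalized and then rescaled to a multiple of the identity. I would proceed as follows.

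\textbf{Step 1: Reduce to volume one.} First I would dilate $G$ to a body of Lebesgue measure $1$: if $|G|=v>0$, replace $G$ by $v^{-1/d}G$, which is still a convex symmetric body and now has unit volume. Since the property of being in the isotropic position is unaffected by which representative of the homothety class we start from (we will apply a further linear map anyway), this normalization costs nothing. Equivalently, one can simply carry the volume normalization along inside the chosen linear map $U$.

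\textbf{Step 2: Form the inertia matrix.} Define the symmetric bilinear form on $\RR^d$ by
\begin{align*}
Q(\xi,\eta)=\int_G \langle x,\xi\rangle\langle x,\eta\rangle\,{\rm d}x,
\end{align*}
and let $M$ be the associated symmetric $d\times d$ matrix, i.e. $M_{jk}=\int_G x_jx_k\,{\rm d}x$. Because $G$ has nonempty interior, $Q$ is positive definite: for $\xi\neq 0$ the linear functional $x\mapsto\langle x,\xi\rangle$ is not a.e.\ zero on $G$, so $Q(\xi,\xi)>0$. Thus $M$ is a symmetric positive definite matrix, and $M^{-1/2}$ is well defined, symmetric, and positive definite.

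\textbf{Step 3: Apply the symmetrizing linear map and check the isotropy identity.} Set $U=c\,M^{-1/2}$ for a scalar constant $c>0$ to be fixed. For any linear transformation $T$, the change of variables $y=Tx$ gives
\begin{align*}
\int_{T(G)}\langle y,\xi\rangle^2\,{\rm d}y=\abs{\det T}\int_G\langle Tx,\xi\rangle^2\,{\rm d}x=\abs{\det T}\int_G\langle x,T^{\mathsf{t}}\xi\rangle^2\,{\rm d}x=\abs{\det T}\,\langle M T^{\mathsf{t}}\xi,T^{\mathsf{t}}\xi\rangle.
\end{align*}
With $T=U=cM^{-1/2}$ (symmetric, so $T^{\mathsf{t}}=T$) the right-hand side becomes $\abs{\det U}\,c^2\langle M^{-1/2}MM^{-1/2}\xi,\xi\rangle=\abs{\det U}\,c^2\abs{\xi}^2$, which is a constant multiple of $\abs{\xi}^2$ for every $\xi$. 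Hence $U(G)$ satisfies \eqref{eq:iso}. Finally choose $c$ so that $\abs{U(G)}=\abs{\det U}\cdot|G|=1$; since after Step 1 we have $|G|=1$ and $\abs{\det U}=c^d\det(M^{-1/2})=c^d(\det M)^{-1/2}$, it suffices to take $c=(\det M)^{1/(2d)}$. With this choice $U(G)$ has unit volume and obeys \eqref{eq:iso} with $L(U(G))^2=\abs{\det U}\,c^2=c^2=(\det M)^{1/d}>0$, consistent with \eqref{eq:iso'}. This proves the lemma.

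There is no serious obstacle here; the only point requiring a word of care is the positive-definiteness of $M$ (so that $M^{-1/2}$ exists), which follows from the body having nonempty interior, and keeping track of the two scalar normalizations — one to make the volume $1$ and one hidden in $c$ — so that both conditions in Definition \ref{def:1} hold simultaneously. Everything else is the spectral theorem plus the linear change-of-variables formula.
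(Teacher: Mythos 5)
Your proof is correct and follows essentially the same route as the paper's: both form the inertia matrix $M_{jk}=\int_G x_jx_k\,{\rm d}x$, take its (inverse) positive square root, and rescale by a scalar so that the image has unit volume, with the isotropy identity checked by the same linear change of variables. Your explicit verification that $M$ is positive definite (via the nonempty interior) is a small point the paper glosses over, but otherwise the arguments coincide.
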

\begin{proof}
Observe that
\begin{align*}
M(\xi)=\int_{G}\langle x, \xi\rangle x{\rm d}x=\bigg(\int_{G}\langle x, \xi\rangle x_1{\rm d}x,\ldots,\int_{G} \langle x, \xi\rangle x_d{\rm d}x\bigg)
\end{align*}
is a positive operator on $\RR^d$. Thus one can find a positive operator $S$ such that $M=S^2$.  Setting $U=c(G, S)S^{-1}$, where $c(G, S)=|\det S|^{1/d}|G|^{-1/d}$, we see that $|U(G)|=1$ and 
\begin{align*}
\int_{U(G)}\langle x, \xi\rangle^2{\rm d}x&=c(G, S)^2|G|^{-1}\int_{G}\langle S^{-1}x, \xi\rangle^2{\rm d}x\\
&=c(G, S)^2|G|^{-1}\langle M(S^{-1}\xi),S^{-1}\xi \rangle\\
&=c(G, S)^2|G|^{-1}|\xi|^2.
\end{align*}
Hence $U(G)$ is in the isotropic position, with the isotropic constant $L(U(G))=c(G, S)|G|^{-1/2}>0$.
\end{proof}

Observe that if the body $G$ in \eqref{eq:109} is replaced with any other set of the form $U(G)$, where $U$ is an invertible linear transformation of $\RR^d$, then the $L^p(\RR^d)$ bounds from \eqref{eq:109} remain unchanged and we have
\begin{align}
\label{eq:47}
C_p(d, G)=C_p(d, U(G)).
\end{align}
Indeed, considering an isometry $U_{p}$ of $L^p(\RR^d)$ given by
\[
U_{p}f=|\det U|^{-1/p}f\circ U^{-1}, \quad \text{for any}\quad  p\ge1,
\]
we obtain \eqref{eq:47}, since 
\begin{align*}
U_{p} \circ M_t^{G}=M_t^{U(G)} \circ U_{p}.
\end{align*}

In view of \eqref{eq:47} the dimension-free estimates are unaffected by a change of the underlying body to an equivalent one. Therefore, from now on unless otherwise stated,  we assume that $G\subset\RR^d$ is in the isotropic position. 
For a symmetric convex body $G\subset\RR^d$, let
\begin{align*}
m^G(t\xi)=\mathcal F (\ind{G})(t\xi)
\end{align*}
be the multiplier corresponding to the operator $M_t^G$ from \eqref{eq:93}.

In \cite{B1} the first author provided the  estimates for $m^G$ and its derivatives in terms of the isotropic constant $L(G)$, see Theorem \ref{prop:1} below. 
\begin{theorem}[{\cite[eq. (10),(11),(12)]{B1}}]
\label{prop:1}
Let $G$ be   a symmetric convex body  $G\subset \RR^d$ which is in the isotropic position. Let  $L=L(G)$ be the isotropic constant of
$G$. Then for every $\xi\in\RR^d\setminus\{0\}$ we have
\begin{align}
\label{eq:69}
|m^G(\xi)|\leq 150(L |\xi|)^{-1},\qquad\text{and}\qquad  |m^G(\xi)-1|\leq 150(L|\xi|),
\end{align}
and
\begin{equation}
\label{eq:69'}
|\langle\xi,\nabla m^G(\xi)\rangle|\le 150.
\end{equation}
\end{theorem}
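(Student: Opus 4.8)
The plan is to reduce all three estimates to elementary one-dimensional facts about the marginal densities of the uniform measure on $G$, exploiting the isotropic normalization $|G|=1$. Fix $\xi\in\RR^d\setminus\{0\}$ and, after a rotation (which does not affect the hypotheses by Lemma \ref{lem:7} together with \eqref{eq:47}), assume $\xi=s e_1$ with $s=|\xi|>0$. Let $\vrho$ be the density of the push-forward of $\ind{G}\,{\rm d}x$ under the projection $x\mapsto x_1$; thus $\vrho\ge 0$, $\int_\RR\vrho=1$, and the isotropy condition \eqref{eq:iso} in the direction $e_1$ reads $\int_\RR u^2\vrho(u)\,{\rm d}u=L^2$. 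Because $G$ is convex and symmetric, $\vrho$ is an even, log-concave probability density on $\RR$ (Prékopa's theorem applied to the indicator of $G$), hence in particular $\vrho$ is non-increasing on $[0,\infty)$ and $\vrho(0)=\|\vrho\|_\infty$. Then $m^G(s e_1)=\calF(\ind G)(s e_1)=\int_\RR \vrho(u)\, e^{2\pi i s u}\,{\rm d}u=\widehat{\vrho}(s)$, so everything is now a statement about the Fourier transform of a one-dimensional even log-concave density with variance $L^2$.

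For the second bound in \eqref{eq:69}, write $m^G(\xi)-1=\int_\RR(e^{2\pi i s u}-1)\vrho(u)\,{\rm d}u=\int_\RR(\cos(2\pi s u)-1)\vrho(u)\,{\rm d}u$ (the sine term vanishes by evenness), and use $|\cos\theta-1|\le \tfrac12\theta^2$ together with Cauchy--Schwarz or directly the variance identity: $|m^G(\xi)-1|\le 2\pi^2 s^2\int_\RR u^2\vrho(u)\,{\rm d}u=2\pi^2 (Ls)^2$, which is far better than $150\,L|\xi|$ for small $L|\xi|$; for $L|\xi|$ large the trivial bound $|m^G(\xi)-1|\le 2$ combined with $L|\xi|\gtrsim 1$ closes the gap, and adjusting constants gives $150(L|\xi|)$ uniformly. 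For the gradient estimate \eqref{eq:69'}, note $\langle\xi,\nabla m^G(\xi)\rangle=s\,\partial_s\widehat\vrho(s)=2\pi i s\int_\RR u\,\vrho(u)e^{2\pi i s u}\,{\rm d}u=-2\pi s\int_\RR u\,\vrho(u)\sin(2\pi s u)\,{\rm d}u$; integrating by parts and using that $\vrho$ is even log-concave with $\int\vrho=1$ and $\int u^2\vrho=L^2$ (so that $\|\vrho\|_\infty=\vrho(0)\lesssim 1/L$ by a standard log-concavity estimate) gives a bound independent of $d$.

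The main obstacle — and the heart of the argument, which is exactly what Bourgain established in \cite{B1} — is the decay estimate $|m^G(\xi)|\le 150(L|\xi|)^{-1}$. Here one cannot merely invoke smoothness; one must use convexity of $G$ quantitatively. The route I would take: by the layer-cake/co-area representation already used for \eqref{eq:21}, $m^G(se_1)=\widehat\vrho(s)$ where $\vrho$ is non-increasing on $[0,\infty)$; for a non-increasing $L^1$-normalized function one integrates by parts against $e^{2\pi i s u}$ to get $|\widehat\vrho(s)|\lesssim \vrho(0)/s$ on the positive axis and symmetrically, hence $|m^G(\xi)|\lesssim \|\vrho\|_\infty/|\xi|$; it then remains to bound the marginal maximum $\|\vrho\|_\infty=\vrho(0)$ by a constant multiple of $1/L$. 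This last point, $\vrho(0)\le c/L$ with $c$ absolute, is the standard Hensley-type bound for central sections of isotropic convex bodies and follows from comparing $\vrho(0)=\vrho(0)\int\vrho$ with $\int u^2\vrho = L^2$ via log-concavity (e.g. $\vrho(u)\ge\vrho(0)(1-|u|/\ell)_+$ shape comparison on an interval of length $\ell\sim 1/\vrho(0)$, forcing $L^2\lesssim 1/\vrho(0)^2$). Tracking the numerical constants through these one-dimensional estimates yields the explicit $150$, and I expect the constant-chasing to be the only tedious part; conceptually the proof is the reduction to a one-dimensional log-concave marginal plus Hensley's slicing bound.
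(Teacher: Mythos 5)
Your proposal follows essentially the same route as the paper's proof: reduce to the one-dimensional section function $\varphi^G_{\xi/|\xi|}$ (your $\vrho$), exploit its evenness, monotonicity on $[0,\infty)$ and log-concavity coming from Brunn--Minkowski, integrate by parts for the decay of $m^G$ and for $\langle\xi,\nabla m^G(\xi)\rangle$, and close with the Hensley-type normalization $\varphi^G_{\zeta}(0)\simeq 1/L$, which is exactly the paper's inequality \eqref{eq:fiL}. Two remarks. First, your treatment of $|m^G(\xi)-1|$ via $|\cos\theta-1|\le\theta^2/2$ and the variance identity is a legitimate (and slightly cleaner) variant of the paper's first-order bound, and your large/small $L|\xi|$ split closes correctly since $|m^G(\xi)|\le |G|=1$. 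Second, the one place where your sketch does not quite deliver is the mechanism offered for $\vrho(0)\le c/L$: the lower-bound shape comparison $\vrho(u)\ge\vrho(0)(1-|u|/\ell)_+$ controls $\int\vrho$ from below and cannot, by itself, bound the second moment $\int u^2\vrho=L^2$ from \emph{above}. What is needed is an upper tail estimate, and this is precisely what the paper establishes first, namely \eqref{eq:fiinf}: $\varphi^G_{\zeta}(u)\le 2\varphi^G_{\zeta}(0)e^{-\varphi^G_{\zeta}(0)|u|}$, obtained from log-concavity together with $\int_0^\infty\varphi^G_{\zeta}=1/2$; from it $L^2\le 8\,\varphi^G_{\zeta}(0)^{-2}$ follows by direct integration. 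With that substitution (which also cleanly handles the boundary terms in your integration by parts for \eqref{eq:69'}), your argument is complete and coincides with the paper's.
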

In Section \ref{sec:5}, for the sake of completeness,  we provide a detailed proof of Theorem \ref{prop:1}. In fact, as we shall see later on, the estimates in \eqref{eq:69} and \eqref{eq:69'} will be the core of the proof of Theorem \ref{thm:4}.

Using Theorem \ref{prop:1}, as the main tool, it was proved in \cite{B1} that 
\begin{align}
\label{eq:55}
C_2(d, G)\le C,
\end{align}
where $C>0$ is a constant that does not depend neither
on $d\in\NN$ nor the underlying body $G\subset \RR^d$.
In view of the dimensional-free estimates for the Poisson semigroup \eqref{eq:47'} in order to prove \eqref{eq:55} it suffices to obtain the following dimensional-free maximal estimate
\begin{align}
\label{eq:56}
\|\sup_{t>0} |(M_{t}^G-P_t)f|\|_{L^2}\le C\|f\|_{L^2}.
\end{align}
The estimate \eqref{eq:56}, in turn, was reduced, using some square function argument and the Plancherel theorem, to the uniform in $\xi\in \RR^d $ estimate  
\begin{equation}
\label{eq:57}
\sum_{n\in \ZZ}\min\big\{(2^n L(G)|\xi|),(2^n L(G)|\xi|)^{-1}\big\}\le C,
\end{equation}
where $C>0$ is a universal constant independent of $d\in\NN$ and the body $G\subset\RR^d$. It is easy to see that \eqref{eq:57} indeed holds. Moreover, it is true regardless of the exact value of the isotropic constant $L(G)$. Remarkably, we do not need to know whether $L(G)$ is comparable to a dimension-free constant.

\subsection{Interlude: the isotropic conjecture}
As we have already underlined, the approach from \cite{B1} does not require any information on the size of the isotropic constant $L(G).$
Recall at this point that $L(G)$
is known to be bounded from below by an absolute constant. 
\begin{proposition}
\label{prop:isobel}
There is a universal constant $c>0$ independent of the dimension such that for all convex symmetric bodies $G\subset\RR^d$ in the isotropic position we have $L(G)\ge c$.
\end{proposition}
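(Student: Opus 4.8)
The plan is to prove the lower bound $L(G)\ge c$ for bodies in the isotropic position by a direct volumetric argument, exploiting the normalization $|G|=1$. The key observation is that $L(G)^2 = \frac1d\int_G|x|^2\,{\rm d}x$ from \eqref{eq:iso'}, so I need a lower bound on the second moment of a probability density which is the indicator of a volume-one convex body. First I would note that among all sets of Lebesgue measure $1$ in $\RR^d$, the one minimizing $\int|x|^2\,{\rm d}x$ is the Euclidean ball $B$ of volume $1$ centered at the origin; this is the classical rearrangement/bathtub principle, since $|x|^2$ is radially increasing and we are distributing a fixed amount of mass. Hence $\int_G|x|^2\,{\rm d}x \ge \int_{B}|x|^2\,{\rm d}x$, and it remains to compute the latter and divide by $d$.

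The second step is the explicit computation: if $B$ has volume $1$ then its radius is $R_d = \omega_d^{-1/d}$ where $\omega_d = |B^2|$ is the volume of the unit Euclidean ball, and using polar coordinates $\int_B|x|^2\,{\rm d}x = d\,\omega_d\int_0^{R_d} r^{d+1}\,{\rm d}r = \frac{d\,\omega_d R_d^{d+2}}{d+2} = \frac{d}{d+2} R_d^2$, using $\omega_d R_d^d = 1$. Therefore $L(G)^2 \ge \frac{1}{d+2}R_d^2 = \frac{1}{d+2}\omega_d^{-2/d}$. Now I invoke Stirling's formula: $\omega_d = \pi^{d/2}/\Gamma(d/2+1)$, so $\omega_d^{-2/d} \simeq \frac{d}{2\pi e}$ up to lower-order factors, which makes $L(G)^2 \gtrsim \frac{1}{d+2}\cdot\frac{d}{2\pi e} \gtrsim \frac{1}{2\pi e}$. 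Thus $L(G)\ge c$ for an absolute constant $c>0$, independent of $d$ and of $G$.

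I would present this as: (i) state the bathtub/rearrangement principle and reduce to the ball of volume one; (ii) compute $\int_B|x|^2\,{\rm d}x$ and hence $L(G)^2\ge \frac{1}{d+2}\omega_d^{-2/d}$; (iii) apply Stirling to conclude $\omega_d^{-2/d}\simeq d$ and deduce the dimension-free lower bound. The only mildly delicate point is making the rearrangement step rigorous: one wants to say that if $A,B$ have the same measure, $B$ is a centered ball, and $\varphi(x)=|x|^2$, then $\int_A\varphi \ge \int_B\varphi$. This follows from writing $\varphi(x) = \int_0^\infty \ind{\{\varphi(x)>s\}}\,{\rm d}s$ and $|A\cap\{\varphi\le s\}| \le \min(|A|,|\{\varphi\le s\}|) = \min(|B|,|\{\varphi\le s\}|) = |B\cap\{\varphi\le s\}|$ since $\{\varphi\le s\}$ is itself a centered ball and $B$ is the largest such contained in any super-level structure — concretely, $|A\setminus\{\varphi\le s\}| = |A| - |A\cap\{\varphi\le s\}| \ge |B| - |B\cap\{\varphi\le s\}| = |B\setminus\{\varphi\le s\}|$, and integrating $\int_A\varphi = \int_0^\infty|A\setminus\{\varphi\le s\}|\,{\rm d}s \ge \int_0^\infty|B\setminus\{\varphi\le s\}|\,{\rm d}s = \int_B\varphi$.

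I do not expect any serious obstacle here; the statement is classical (it is essentially the lower bound half of the well-known fact that the isotropic constant lies between two absolute constants on one side and is conjecturally bounded above, which is the content of the slicing problem alluded to in the surrounding text). The main thing to be careful about is not to conflate this easy lower bound with the genuinely hard upper bound (Bourgain's slicing problem), and to keep all constants genuinely independent of the dimension, which the computation above does.
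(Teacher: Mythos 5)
Your proof is correct and is essentially the paper's argument: the paper's comparison of $G$ with the volume-one ball $r_dB^2$ (swapping the equal-volume pieces $G\setminus r_dB^2$ and $r_dB^2\setminus G$) is exactly the special case of the bathtub principle you invoke, and the subsequent computation $L(r_dB^2)^2=r_d^2/(d+2)\simeq 1$ is identical to yours.
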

\begin{proof}
Let $r_d$ be such that $|r_d B^2|=1$. Then $r_d B^2$ is in the isotropic position and $r_d=|B^2|^{-1/d}\simeq d^{1/2}$.
Using \eqref{eq:iso'} and polar coordinates one has
\begin{align*}
L(r_d B^2)^2=\frac{1}{d}\int_{ r_d B^2}|x|^2{\rm d}x=\frac{|B^2|r_d^{d+2}}{d+2}=\frac{r_d^{2}}{d+2}\simeq 1.
\end{align*}
Clearly, $|x|\ge r_d$ on $G\setminus r_d B^2$ and $|x|\le r_d$ on $r_dB^2\setminus G.$ Thus, using \eqref{eq:iso'} and the observation that $G\setminus r_d B^2$ and $r_d B^2\setminus G$ have the same volume, we estimate
\begin{align*}
dL(G)^2&=\int_{G} |x|^2{\rm d}x= \int_{G\cap r_d B^2} |x|^2{\rm d}x
+\int_{G\setminus r_d B^2} |x|^2{\rm d}x
\ge\int_{G\cap r_d B^2} |x|^2{\rm d}x
+ r_d^2|G\setminus r_d B^2|\\
&\ge \int_{G\cap r_d B^2} |x|^2{\rm d}x
+\int_{r_d B^2\setminus G} |x|^2{\rm d}x=
dL(r_d B^2)^2.
\end{align*}
Therefore, we see that $L(G)\ge L(r_d B^2)\ge c>0$. This completes the proof.  
\end{proof}

Conversely, it is not difficult to show the following upper bound. 
\begin{proposition}
\label{prop:isoup}
There is a universal constant $C>0$ independent of the dimension such that for all convex symmetric bodies $G\subset\RR^d$ in the isotropic position we have $L(G)\le Cd^{1/2}$.
\end{proposition}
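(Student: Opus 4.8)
The plan is to show that for a convex symmetric body $G\subset\RR^d$ in the isotropic position one has $L(G)\le Cd^{1/2}$, which by \eqref{eq:iso'} amounts to proving $\int_G|x|^2{\rm d}x\le Cd^2$. Since $|x|^2=\sum_{k=1}^d x_k^2$ and the isotropic condition \eqref{eq:iso} (taking $\xi=e_k$) gives $\int_G x_k^2{\rm d}x=L(G)^2$, this is consistent but circular if used directly; the point is rather to bound $\sup_{x\in G}|x|$ and then use $|G|=1$. So first I would establish a diameter-type bound: there is a universal constant $C$ such that $G$ in isotropic position satisfies $G\subset Cd^{1/2}B^2$, i.e. every $x\in G$ has $|x|\le Cd^{1/2}$.

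To get the inclusion $G\subset Cd^{1/2}B^2$, I would argue by contradiction using convexity and symmetry. Suppose some $x_0\in G$ has $|x_0|=R$ very large compared to $d^{1/2}$. Because $G$ is symmetric and convex, $G$ contains the segment from $-x_0$ to $x_0$; moreover, since $G$ has non-empty interior, it contains a small ball $\rho B^2$ around the origin, and hence the convex hull of $\rho B^2$ and $\{\pm x_0\}$, which is a ``double cone'' (a bipyramid) of height $2R$ in the direction $x_0/|x_0|$ with a transverse cross-section of radius comparable to $\rho$ near the center. Actually the cleaner route: by John's theorem or a direct argument one knows $G$ contains a ball of radius $c$ (a universal constant) once $|G|=1$ — more precisely, for a symmetric convex body of volume $1$ one has $G\supseteq c\,d^{-1/2}B^2$ fails, but $G$ does contain a ball of radius $\gtrsim d^{-1/2}$, and this is enough. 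Then for the direction $\theta=x_0/|x_0|$, estimate $\int_G\langle x,\theta\rangle^2{\rm d}x$ from below: the portion of $G$ with $\langle x,\theta\rangle\in[R/2,R]$ is a cone-like slab whose volume times $(R/2)^2$ already shows $\int_G\langle x,\theta\rangle^2{\rm d}x\gtrsim R^2\cdot(\text{vol of that slab})$. Balancing against $\int_G\langle x,\theta\rangle^2{\rm d}x=L(G)^2\le L(G)^2$ and $|G|=1$, together with the lower bound $L(G)\ge c$ from Proposition~\ref{prop:isobel}, forces $R\le Cd^{1/2}$ once one tracks the polynomial-in-$d$ losses coming from the thinness of the cross-section of the cone (which is where a factor $d^{1/2}$ rather than a universal constant enters).

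Once $G\subset Cd^{1/2}B^2$ is in hand, the conclusion is immediate:
\begin{align*}
dL(G)^2=\int_G|x|^2{\rm d}x\le (Cd^{1/2})^2|G|=C^2d,
\end{align*}
hence $L(G)^2\le C^2$, which in fact gives the stronger bound $L(G)\le C$; reading off only $L(G)\le Cd^{1/2}$ is then trivial. So the real content is the geometric diameter bound $G\subset Cd^{1/2}B^2$.

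The main obstacle I expect is making the ``cone volume'' estimate clean and getting the right power of $d$: one must quantify how much volume a symmetric convex body of volume $1$ must place far out in a given direction if it reaches distance $R$ in that direction, and the transverse cross-section of the relevant cone can be as thin as $\sim d^{-1/2}$ in each of the $d-1$ transverse directions, producing the loss that turns a hoped-for universal bound into the stated $d^{1/2}$. An alternative, possibly slicker, approach avoiding John's theorem: use \eqref{eq:iso} with $\xi=x_0$ to write $\int_G\langle x,x_0\rangle^2{\rm d}x=L(G)^2|x_0|^2=L(G)^2R^2$, and bound the left side from below by restricting to a neighborhood of $x_0$ inside $G$ (guaranteed by convexity together with $G\supseteq \rho B^2$ for some $\rho\gtrsim d^{-1/2}$), getting $\int_G\langle x,x_0\rangle^2{\rm d}x\gtrsim (R/2)^2 R^2\cdot \rho^{d-1}$ — but $\rho^{d-1}$ is exponentially small, so one must instead integrate over the whole cone, not just a neighborhood of $x_0$, and carefully compute $\int_{\mathrm{cone}}\langle x,\theta\rangle^2{\rm d}x$, which scales like $R^2$ times the cone's volume; comparing with $L(G)^2R^2$ and using volume $=1$ then isolates $R$. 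Packaging this comparison correctly, with explicit universal constants, is the step that will take the most care.
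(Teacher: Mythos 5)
There is a genuine gap, and in fact the central geometric claim you hang the argument on is false. Your plan is to prove a circumradius bound $G\subseteq Cd^{1/2}B^2$ and then read off $dL(G)^2=\int_G|x|^2\,{\rm d}x\le C^2 d$, observing that this would even give $L(G)\le C$; that last observation should have been a red flag, since $L(G)\le C$ is precisely Conjecture~\ref{con:iso}, which is open. Concretely, the normalized cross-polytope $rB^1$ with $|rB^1|=1$ has $r=(d!/2^d)^{1/d}\simeq d/(2e)$, hence circumradius $\simeq d$ rather than $d^{1/2}$, while its isotropic constant is $O(1)$; so no universal $C$ makes $G\subseteq Cd^{1/2}B^2$ true. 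What one actually needs for the stated conclusion is $G\subseteq C d\,B^2$, but the only available circumradius bound of this type is $G\subseteq c\,d\,L(G)\,B^2$, which is circular here. Your fall-back cone-volume argument also cannot be made to work: the convex hull of $\rho B^2$ and $\{\pm x_0\}$ has transverse cross-section volume of order $\rho^{d-1}|B^{2,d-1}|$, which is \emph{exponentially} small in $d$ (not merely a polynomial loss of $d^{1/2}$ as you anticipate), so the resulting bound on $R$ is exponentially large and useless. You also quote the wrong order for the inradius (you write $\rho\gtrsim d^{-1/2}$; for isotropic bodies one actually has $\rho\gtrsim L(G)\gtrsim 1$ by Proposition~\ref{prop:isobel} and the fact below), but this does not rescue the cone estimate.

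The paper's proof goes the opposite way: instead of bounding the circumradius from above, it bounds the \emph{inradius} $r(G)$ from below. The key input (quoted from \cite[Section 3.1, p.108]{BGVV}) is the inequality $r(G)\ge c\,L(G)$ for an absolute $c>0$. Then $cL(G)B^2\subseteq G$ and $|G|=1$ force $(cL(G))^d|B^2|\le 1$, i.e. $cL(G)\le |B^2|^{-1/d}\simeq d^{1/2}$. Note how the $d$-th root of the volume tames exactly the exponential factors that sink the cone estimate; that is the structural reason this route works while yours does not. If you want a self-contained version, the real work is proving $r(G)\gtrsim L(G)$, not the circumradius bound you set out to prove.
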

\begin{proof}
If $r(G)$  is the largest radius $r>0$ such that $rB^2\subseteq G$ then there is an absolute constant $c>0$ such that
\begin{align*}
cL(G)\le r(G),
\end{align*}
we refer to \cite[Section 3.1, p.108]{BGVV} for more details. 
It follows that $cL(G)B^2\subseteq G$ and 
\begin{align*}
(cL(G))^d|B^2|\le |G|=1,
\end{align*}
and consequently, using  $|B^2|^{-1/d}\simeq d^{1/2}$ we obtain the desired claim. 
\end{proof}

The estimate from Proposition \ref{prop:isoup} was improved by the first author in \cite{B00}, where it was shown that $L(G)=O(d^{1/4}\log d)$. Klartag \cite{Kla1} proved that $L(G)=O(d^{1/4})$, and this is the best currently available general estimate for $L(G)$. However, the uniform bound from above for $L(G)$ is a well-known open problem with several equivalent formulations. More precisely, we are  lead to the following conjecture.
\begin{conjecture}
\label{con:iso}
There is a constant $C>0$ independent of $d\in\NN$ such that for all convex symmetric bodies $G\subset\RR^d$ in the isotropic position we have $L(G)\le C$.
\end{conjecture}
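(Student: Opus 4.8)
The plan cannot be a genuine proof: Conjecture~\ref{con:iso} is the \emph{hyperplane conjecture} (equivalently Bourgain's slicing problem), which is open, so what follows only outlines the strategy by which it is attacked and the precise point at which every known attempt stalls. You may regard it as a map of the terrain rather than a route through it.

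First I would pass from the bound on $L(G)$ to a statement about concentration of the Euclidean norm under the uniform probability measure $\mu_G$ on an isotropic body $G$. Since the covariance of $\mu_G$ equals $L(G)^2 I$, a typical point of $G$ has $|x|\approx \sqrt{d}\,L(G)$, and a classical reduction due to Eldan and Klartag bounds $L(G)$ by an absolute constant plus a (suitably normalized) \emph{thin-shell} parameter $\sigma_d$, the supremum over all isotropic bodies of $\sqrt{\operatorname{Var}_{\mu_G}(|x|)}$. Hence it would suffice to show that the mass of any isotropic convex body is concentrated in a spherical shell of width $o(\sqrt{d}\,L(G))$; in fact any polylogarithmic-in-$d$ thin-shell bound already forces $L(G)=O(1)$ once combined with the known a priori estimate $L(G)=O(d^{1/4})$ from \cite{Kla1}.

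Next I would attack the thin-shell estimate through the Kannan--Lov\'asz--Simonovits circle of ideas: the isoperimetric (Cheeger) constant of $\mu_G$ controls $\operatorname{Var}_{\mu_G}(|x|)$ via the associated Poincar\'e inequality applied to $x\mapsto |x|^2$, while, conversely, good thin-shell bounds improve the Cheeger constant through a localization argument. Concretely one runs Eldan's stochastic localization starting from $\mu_G$, follows the covariance process $A_t$ of the resulting family of measures, and bounds $\mathbb{E}\,\|A_t\|_{\mathrm{op}}$ by a Gr\"onwall-type inequality; controlling this operator norm for times of order $1$ is exactly what converts into a near-optimal Cheeger bound, hence a thin-shell bound, hence a slicing bound. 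This is the route realized, with successively better constants, by Eldan, Lee--Vempala, Chen, and Klartag--Lehec, sharpening Bourgain's original logarithmic-Laplace-transform estimate $L(G)=O(d^{1/4}\log d)$ from \cite{B00}.

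The main obstacle---and the reason this is a plan and not a proof---is closing the self-improving loop. The stochastic-localization estimate on $\mathbb{E}\,\|A_t\|_{\mathrm{op}}$ currently degrades over the relevant time interval by a factor that is only polylogarithmic in $d$, so the bootstrap stabilizes at $L(G)=O((\log d)^{c})$ rather than $O(1)$. A proof of Conjecture~\ref{con:iso} would require either an a priori bound, uniform in $d$ and in the time parameter, on the operator norm of the localization covariance, or an entirely new mechanism forcing $\int_G |x|^2\,{\rm d}x\asymp d$ for \emph{every} isotropic body; no such input is presently available. We stress, however, that none of the maximal-function results surveyed here depends on it: as noted after \eqref{eq:57}, the argument of \cite{B1} leading to \eqref{eq:55} is completely insensitive to the size of $L(G)$.
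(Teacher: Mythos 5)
You correctly recognize that Conjecture~\ref{con:iso} is Bourgain's slicing (hyperplane) conjecture and that the paper does not prove it: the statement is labelled a conjecture, and Section~2 only records what is known --- the lower bound $L(G)\ge c$ of Proposition~\ref{prop:isobel}, the crude upper bound $L(G)\le Cd^{1/2}$ of Proposition~\ref{prop:isoup}, the estimates $O(d^{1/4}\log d)$ and $O(d^{1/4})$ from \cite{B00} and \cite{Kla1}, and the $1$-unconditional case of Proposition~\ref{prop:1unciso} --- before emphasizing, just as you do at the end of your note, that the maximal-function results of \cite{B1} are insensitive to the size of $L(G)$. Since there is no proof in the paper to reproduce, a survey of the thin-shell/stochastic-localization programme is the appropriate kind of response, and yours is largely faithful to the state of the art.

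One side remark does overclaim, however: it is not the case that a polylogarithmic thin-shell bound combined with the a priori estimate $L(G)=O(d^{1/4})$ of \cite{Kla1} forces $L(G)=O(1)$. The Eldan--Klartag reduction gives $L(G)\lesssim\sigma_d$, so a polylogarithmic thin-shell constant $\sigma_d$ yields only a polylogarithmic slicing constant, which is precisely why the Klartag--Lehec route terminates at $L(G)=O((\log d)^{4})$ rather than $O(1)$. There is no known self-improving mechanism by which the $O(d^{1/4})$ a priori bound promotes a polylogarithmic thin-shell estimate to a dimension-free one; if such a bootstrap existed, Conjecture~\ref{con:iso} would already have been a theorem at the time this survey was written.
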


This conjecture was verified for various classes of convex symmetric bodies. To give an example we consider the class of $1$-unconditional symmetric convex bodies. Let $\{e_1,\ldots, e_d\}$ denote the canonical basis in  $\RR^d$. We say that $G\subset\RR^d$ is such a body, whenever, for every choice of signs $\varepsilon_1,\ldots,\varepsilon_d\in\{-1,1\}$, we have
\begin{align*}
\Big\|\sum_{i=1}^d \varepsilon_i x_i e_i\Big\|_G=\|x\|_G,\quad \text{for all}\quad x=(x_1,\ldots, x_d)\in\RR^d,
\end{align*}
where $\|x\|_G=\inf\{t>0\colon x\in tG\}$ denotes the Minkowski norm  associated with $G$.
\begin{proposition}
\label{prop:1unciso}
There is a constant $C>0$ independent of $d\in\NN$ such that for all $1$-unconditional  convex bodies $G\subset\RR^d$ in the isotropic position
we have $L(G)\le C$.
\end{proposition}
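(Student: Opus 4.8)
The plan is to establish the bound $L(G) \le C$ for $1$-unconditional convex bodies $G \subset \RR^d$ in isotropic position via the formula \eqref{eq:iso'}, that is, by showing $\frac{1}{d}\int_G |x|^2 \, {\rm d}x \le C^2$. First I would exploit the product-like structure that $1$-unconditionality forces. By symmetry across coordinate hyperplanes, $\int_G |x|^2 \, {\rm d}x = d \int_G x_1^2 \, {\rm d}x$, so it suffices to bound $\int_G x_1^2 \, {\rm d}x$ by an absolute constant. The key structural tool is that for $1$-unconditional bodies one can compare $G$ with the cube: a classical fact (going back to Bollob\'as--Leader, or deducible from the Loomis--Whitney / Meyer-type inequalities) is that if $|G|=1$ then $G$ contains a dilate $c\,B^\infty$ of the unit cube and is contained in a dilate $C\sqrt{d}\,B^1$ — but more usefully, one has pointwise control on the marginals.

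The cleanest route is via the \emph{$\psi_2$-behaviour} of linear functionals on $1$-unconditional bodies, or alternatively a direct second-moment estimate. Concretely, I would use the following: if $G$ is $1$-unconditional with $|G|=1$, then writing $G$ through its "slices" and using that the function $t \mapsto |\{x \in G : x_1 = t\}|$ is even and log-concave (by Brunn--Minkowski, since $G$ is convex and symmetric), the measure of $\{x \in G : |x_1| > s\}$ decays and one can extract that $\int_G x_1^2 \, {\rm d}x \le C$ provided the "width" of $G$ in the $e_1$-direction is $O(1)$. The point where $1$-unconditionality enters decisively is in bounding this width: since $G$ contains $c B^\infty$ for an absolute $c$ (as $|G|=1$ and $G$ is $1$-unconditional, forcing $\|e_i\|_G$ to be controlled — indeed $\prod_i \|e_i\|_G^{-1} \ge$ a constant multiple of $|G|$ would be false directionally, so instead one argues $\int_0^{1/\|e_1\|_G} |\{x\in G: x_1=t\}|\,{\rm d}t$ contributes a definite fraction of the volume), one gets that the half-width $\|e_1\|_G^{-1}$ in each coordinate direction is $\lesssim 1$. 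Combined with log-concavity of the marginal and normalization, this yields $\int_G x_1^2\,{\rm d}x \lesssim \|e_1\|_G^{-2} \lesssim 1$.

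I expect the main obstacle to be the lower bound on the coordinate widths of $G$: one must rule out that $G$ is very "thin" in some coordinate direction (which would be compensated by being very long in others, keeping $|G|=1$), because a long thin direction would blow up the second moment. The resolution is that $1$-unconditionality makes $G$ simultaneously thin or simultaneously fat in comparable ways — precisely, by the unconditional structure and a Loomis--Whitney-type argument, $\prod_{i=1}^d (2\|e_i\|_G^{-1}) \le |G| \cdot d^{O(1)}$ is the wrong direction, so instead one uses that $B^\infty$ rescaled to volume one sits inside $G$ up to an absolute constant, forcing $\|e_i\|_G^{-1} \gtrsim 1$ uniformly; the upper bound $\|e_i\|_G^{-1} \lesssim 1$ then comes from $|G| = 1$ together with the fact that $G \subseteq \{|x_i| \le \|e_i\|_G^{-1}\}$ intersected over all $i$, and if some width were huge, integrating the log-concave marginal against $|G|=1$ forces all others tiny, contradicting the lower bound just obtained. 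Once the widths are pinned to $\Theta(1)$, \eqref{eq:iso'} closes the argument immediately. Alternatively, if a self-contained argument is preferred, one can cite the known result of Bobkov--Nazarov \cite{BGVV} that $1$-unconditional isotropic bodies satisfy $L(G) \le C$, but the marginal/width argument above is the conceptual heart.
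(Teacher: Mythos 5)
Your strategy has a fatal flaw: the quantity you propose to pin down, the coordinate half-width $w_i=\|e_i\|_G^{-1}$, is \emph{not} $O(1)$ for $1$-unconditional isotropic bodies. The normalized cross-polytope $G=rB^1$ with $|rB^1|=1$ is $1$-unconditional and isotropic with $L(G)\simeq 1$, but $r=(2^d/d!)^{-1/d}\simeq d$, so $w_i\simeq d$. Consequently the bound $\int_G x_1^2\,{\rm d}x\le w_1^2$ (which is the true content of your inequality $\int_G x_1^2\,{\rm d}x\lesssim \|e_1\|_G^{-2}$, since unconditionality plus convexity gives $G\subseteq\{|x_1|\le w_1\}$) is correct but hopeless: it can lose a factor of $d^2$. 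Several intermediate claims are also false or circular: (i) a $1$-unconditional body of volume one need not contain $cB^\infty$ (take the box $[-T,T]\times[-\tfrac1{4T},\tfrac1{4T}]^{d-1}$ suitably normalized), and with isotropicity added this containment is essentially the Bobkov--Nazarov theorem, whose proof already uses $L(G)\le C$; (ii) your contradiction for the upper width bound runs the wrong way: $G\subseteq\prod_i[-w_i,w_i]$ yields $1=|G|\le\prod_i 2w_i$, a \emph{lower} bound on the product of widths, so one huge $w_i$ forces nothing about the others. (A small separate slip: unconditionality gives sign symmetries, not permutation symmetry, so $\int_G|x|^2\,{\rm d}x=d\int_Gx_1^2\,{\rm d}x$ does not follow "by symmetry across coordinate hyperplanes"; it does follow from the isotropy condition \eqref{eq:iso}, which gives $\int_Gx_i^2\,{\rm d}x=L^2$ for every $i$.)

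The paper itself does not prove the proposition but defers to \cite{BGVV}; the standard argument there is short and worth knowing, and it replaces your width estimate by a section estimate. By the Loomis--Whitney inequality, $|G|^{d-1}\le\prod_{i=1}^d|P_{e_i^\perp}G|$, where $P_{e_i^\perp}$ is the orthogonal projection onto $e_i^\perp$. For a $1$-unconditional body the projection coincides with the central section: if $x\in G$ then averaging $x$ with its reflection in the $i$-th coordinate shows $P_{e_i^\perp}x\in G\cap e_i^\perp$, so $|P_{e_i^\perp}G|=\varphi^G_{e_i}(0)$. Since $|G|=1$ and, by \eqref{eq:fiL}, $\varphi^G_{e_i}(0)\le 3/L$, we get $1\le\prod_{i=1}^d\varphi^G_{e_i}(0)\le(3/L)^d$, i.e.\ $L(G)\le 3$. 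Note that the correct mechanism is a \emph{lower} bound on the product of the hyperplane sections converted into an upper bound on $L$ via \eqref{eq:fiL}, not any control of the widths $\|e_i\|_G^{-1}$, which genuinely grow with $d$.
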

For the proof of Proposition \ref{prop:1unciso}, and a more detailed exposition about the subject of geometry of isotropic convex bodies  we refer to the monograph \cite{BGVV}. Interestingly, the issue of the isotropic constant did not impact the proofs of the dimension-free
bounds for the Hardy--Littlewood maximal function \eqref{eq:109} obtained in \cite{B1}. This gives us strong motivation to understand the role of the isotropic constant $L(G)$ in the estimates for $C_p(d, G)$ for all $p\in(1, \infty]$ for  general convex symmetric bodies $G\subset\RR^d$.

\subsection{The $L^p$ results for  $p\in(3/2, \infty]$ and fractional integration method}
The first author \cite{B2}, and independently Carbery \cite{Car1}, extended the $L^2(\RR^d)$ result from \cite{B1}, and showed that for every $p\in(3/2, \infty]$ there exists a numerical constant $C_p>0$, which does not depend on the dimension $d\in\NN$ such that for every convex symmetric body $G\subset\RR^d$ we have
\begin{align}
\label{eq:54}
C_p(d, G)\le C_p.
\end{align}

They also showed that if the supremum in \eqref{eq:109} is restricted to the set of dyadic numbers $\mathbb D$, then inequality \eqref{eq:54} remains valid for all $p\in(1, \infty]$. The methods used in these papers were completely different. We shall focus our attention merely on Carbery's paper \cite{Car1}, since it was an important starting point for the papers  \cite{Mul1} and \cite{B3}, which will be discussed in the next subsection. 
   
The first main idea introduced in \cite{Car1} reduced inequality \eqref{eq:54} to proving that for every $p\in(3/2, 2]$ there exists $C_p>0$ independent of  $d\in\NN$ such that for every convex symmetric body $G\subset\RR^d$ we have 
\begin{align}
\label{eq:58}
\sup_{n\in\ZZ}\big\|\sup_{t \in [2^n,2^{n+1}]}|M_t^G f|\big\|_{L^p} \le C_p  \|f\|_{L^p},
\end{align}
for every $f\in L^p(\RR^d)$. This was achieved by appealing to an almost orthogonality principle, which combined with the Littlewood--Paley inequality \eqref{eq:6} and inequality \eqref{eq:47'} for the Poisson semigroup, resulted in \eqref{eq:54}. The author of \cite{Car1} adjusted an almost orthogonality principle to dimension-free setting from the unpublished notes of Christ, see also \cite{Car2} for a more detailed discussion.

The second main idea of \cite{Car1} relies on a fractional derivative/integration method, and it was  used to prove \eqref{eq:58}. Let $\calF_{\RR}$ denote the one dimensional Fourier transform. For $\alpha\in(0, 1)$, let $\mathcal D^{\alpha}$ be the fractional derivative
\begin{equation*}
\mathcal D^{\alpha}F(t)=\mathcal D^{\alpha}_tF(t)=\mathcal D^{\alpha}_uF(u)\big|_{u=t}
=\calF_{\RR}\big((2i\pi \xi )^{\alpha}\calF_{\RR}^{-1}(F)(\xi)\big)(t),\quad  \text{for}\quad t\in\RR.
\end{equation*}
This formula gives a well defined tempered distribution on $\RR$. Simple computations show that 
\begin{align*}
\mathcal D_t^{\alpha}m^G(t\xi)=\int_{G}(2\pi i
x\cdot\xi)^{\alpha}e^{-2\pi i tx\cdot\xi}{\rm d}x,\quad  \text{for}\quad t>0,
\end{align*}
where $m^G(t\xi)=\mathcal F(\ind{G})(t\xi)$.
Moreover, \cite[Lemma 6.6]{DGM1} guarantees that
\begin{align*}
\mathcal D_t^{\alpha}m^G(t\xi)
=-\frac1{\Gamma(1-\alpha)}\int_t^{\infty}(u-t)^{-\alpha}\frac{{\rm d}}{{\rm d}u}m^G(u\xi){\rm d}u, \quad \text{for}\quad \xi\in\RR^d.
\end{align*}
If $\mathcal P_u^{\alpha}$ is the operator associated with the multiplier
\begin{align*}
\mathfrak p_u^{\alpha}(\xi)=u^{\alpha+1}\mathcal D_v^{\alpha}\bigg(\frac{m^G(v\xi)}{v}\bigg)\bigg|_{v=u}, \quad \text{for}\quad \xi\in\RR^d,
\end{align*}
then one can see that
\begin{align}
\label{eq:99}
M_t^Gf(x)=\mathcal F^{-1}(m^G(t\xi)\mathcal F
f)(x)=\frac{1}{\Gamma(\alpha)}\int_t^{\infty}\frac{t}{u}
\bigg(1-\frac{t}{u}\bigg)^{\alpha-1}\mathcal P_{u}^{\alpha}f(x)\frac{{\rm d}u}{u}.
\end{align}
It was shown \cite{Car1} that for general symmetric convex bodies one has
\begin{align}
\label{eq:34'}
\|\mathcal P_1^{\alpha} f\|_{L^p}\lesssim \|f\|_{L^p}+\|T_{(\xi\cdot\nabla)^{\alpha}m^G} f\|_{L^p},
\end{align}
where $T_{(\xi\cdot\nabla)^{\alpha}m^G} f$ is  the multiplier operator associated with the symbol
\[
(\xi\cdot\nabla)^{\alpha}m^G(\xi)=\mathcal D_t^{\alpha}m^G(t\xi)|_{t=1}.
\]
The estimate from \eqref{eq:34'} immediately implies that
\begin{align}
\label{eq:38'}
\sup_{u>0}\|\mathcal P_u^{\alpha}\|_{L^p\to L^p}
\lesssim_p 1+\|T_{(\xi\cdot\nabla)^{\alpha}m^G}\|_{L^p\to L^p},
\end{align}
since the
multipliers $\mathfrak p_u^{\alpha}$ are dilations of $\mathfrak p_1^{\alpha}$.
Using  \eqref{eq:99} and \eqref{eq:38'}  one controls
\begin{equation}
\label{eq:fracest}
\big\|\sup_{t \in [2^n,2^{n+1})}|M^G_t f|\big\|_{L^p}
\le C_p  \big(1+\|T_{(\xi\cdot\nabla)^{\alpha}m^G}\|_{L^p\to L^p}\big)\|f\|_{L^p},
\end{equation} 
whenever $\alpha>1/p$. Now, since $T_{(\xi\cdot\nabla)^{1}m^G}$ is associated with the symbol $(\xi\cdot \nabla) m^G=\xi\cdot \nabla m^G(\xi)$, by Plancherel's theorem and \eqref{eq:69'} we have
$\|T_{(\xi\cdot\nabla)^{1}m^G}\|_{L^2\to L^2}\le C$. Clearly, $T_{(\xi\cdot\nabla)^{0}m^G}=M_1$ is a contraction on $L^1(\RR^d)$. Then by complex interpolation, as in \cite{Car1}, we get  $\|T_{(\xi\cdot\nabla)^{\alpha}m^G}\|_{L^p\to L^p}\le C_{\alpha}$, whenever $\alpha<2/p'$. In view of the restriction for $\alpha>1/p$ in \eqref{eq:fracest} we obtain \eqref{eq:58} for $p\in(3/2, 2]$. 

The above-mentioned method of fractional integration was exploited in \cite{Mul1} and \cite{B3}.

\subsection{The  $L^p$ result for $p\in(1, \infty]$,  the case of $q$-balls}
M\"uller \cite{Mul1} proved, for all $p\in[1, \infty]$ and for every symmetric convex body $G\subset\RR^d$,  a remarkable upper bound for $C_p(d, G)$  in terms of certain geometric invariants. To be more precise, assuming that the body $G$ is in the isotropic position, we  define two constants, geometric invariants, by setting 
\begin{align*}
\sigma(G)^{-1}=\max\big\{\varphi^G_{\xi}(0):\xi\in\mathbb S^{d-1}\big\},
\end{align*}
and
\begin{align*}
Q(G)&=\max\big\{{\rm Vol}_{d-1}(\pi_{\xi}(G)): \xi\in\mathbb S^{d-1}\big\},
\end{align*}
where $\varphi^G_{\xi}(0)={\rm Vol}_{d-1}(\{x\in G\colon x\cdot \xi =0\})$, while $\pi_{\xi}:\RR^d\to\xi^{\perp}$ denotes the orthogonal projection
of $\RR^d$ onto the hyperplane perpendicular to $\xi$.  It follows from \eqref{eq:fiL} that 
$\frac{1}{8}L(G)\le \sigma(G)\le 8L(G)$.

Using these two linear invariants $\sigma(G)$ and $Q(G)$ it was proved in \cite{Mul1}  that for every $p\in(1, \infty]$ and for every symmetric convex body $G\subset\RR^d$ there is a constant $C(p, \sigma(G), Q(G))>0$ independent of the dimension $d\in\NN$ such that
\begin{align}
\label{eq:100}
 C_p(d, G)\le C(p, \sigma(G), Q(G)).
\end{align}
In other words $C_p(d, G)$ may depend on $\sigma(G)$ and $Q(G)$, but not explicitly on the dimension $d\in\NN$. For $p\in(3/2, \infty]$ inequality \eqref{eq:100}  is weaker than the estimates from \cite{B2} and \cite{Car1}, which show that $C_p(d, G)$ can be even chosen independently of $d$ and $G$. However, using \eqref{eq:100} it was proved that
$C_p(d, B^q)$ is independent of the dimension for all  $q\in[1, \infty)$, since $\sigma(B^q)$ and $Q(B^q)$ can
be explicitly computed and they are independent of the dimension, but they depend on $q$.  For the cubes $G=B^{\infty}$ it turned out that $\sigma(B^{\infty})$ is independent of the dimension, but $Q(B^{\infty})=d^{1/2}$, and at that time the cubes were thought of as  candidates for a counterexample. However, the first author refined M\"uller's approach, and provided the dimensional-free bounds for $C_p(d, B^{\infty})$ for all $p\in(1, \infty]$ as well.
We shall now give a description of M\"uller's methods, which resulted in inequality \eqref{eq:100}.

As in \cite{Car1}, the proof of \eqref{eq:100} in \cite{Mul1} is reduced to estimates of the $L^p(\RR^d)$ norm of the operator $T_{(\xi\cdot\nabla)^{\alpha}m^G}$. Recall, that the complex interpolation allowed Carbery to prove dimension-free $L^p(\RR^d)$ bounds for $T_{(\xi\cdot\nabla)^{\alpha}m^G}$ only in the restricted range of $\alpha<2/p'$. M\"uller, by considering a suitable  admissible family of Fourier multiplier operators, was able to prove that, for all $p\in(1, \infty)$ and for all $\alpha\in(1/2,1)$, one has 
\begin{equation*}
\|T_{(\xi\cdot\nabla)^{\alpha}m^G}\|_{L^p\to L^p}
\le C_{p}(\alpha,\sigma(G),Q(G)).
\end{equation*}
More precisely, by using complex interpolation it was shown in \cite{Mul1} that
\begin{equation}
\label{eq:Mul2}
\|T_{(\xi\cdot\nabla)^{\alpha}m^G}\|_{L^p\to L^p}
\le C_{\alpha}\big(1+\|T_{-2\pi |\xi| m^G(\xi)}\|_{L^p\to L^p}\big),
\end{equation}
for $\alpha\in(1/2,1)$, where $T_{-2\pi|\xi| m^G(\xi)}$ is the multiplier operator associated with the symbol $-2\pi|\xi| m^G(\xi)$.

Finally, \eqref{eq:Mul2} reduced the task to justifying
\begin{equation}
\label{eq:Mul3}
\|T_{-2\pi|\xi| m^G(\xi)}\|_{L^p\to L^p}\le  C_{p}(\sigma(G),Q(G)),
\end{equation}
for all $p\in(1,\infty)$. Since $T_{-2\pi|\xi| m^G(\xi)}$ is self-adjoint while proving \eqref{eq:Mul3} we can assume that $p\in[2,\infty)$. The key part of the proof of \eqref{eq:Mul3} in \cite{Mul1} is based on the following identity
\begin{align*}
-2\pi|\xi|m^G(\xi)=\sum_{j=1}^d\bigg(-i\frac{\xi_j}{|\xi|}\bigg) (-2\pi i\xi_j m^G(\xi)).
\end{align*}
Thus, defining the measures $\mu_j=\frac{{\rm d}}{{\rm d}x_j} \ind{G}(x)$ we see that $$T_{-2\pi|\xi| m^G(\xi)}=\sum_{j=1}^d R_j(\mu_j *f),$$
where $R_j$ is the Riesz transform, corresponding to the multiplier $-i\xi_j/|\xi|$ for $j\in\{1, \ldots, d\}$.

We now are at the stage, where the dimension-free estimates for the vector of Riesz transforms enter into the game. The third author \cite{SteinRiesz} proved that for every $p\in(1, \infty)$ there is a constant $C_p>0$ independent of the dimension $d\in\NN$  such that the following estimate 
\begin{align}
\label{eq:62}
\Big\|\big(\sum_{j=1}^d |R_j f|^2\big)^{1/2}\Big\|_{L^p}\leq C_p\|f\|_{L^p},
\end{align}
holds for every $f\in L^p(\RR^d)$.

Then, dimension-free estimates for the vector of Riesz transforms \eqref{eq:62} on $L^{p'}(\RR^d)$, together with a duality argument reduce the problem to the following square function estimate
\begin{equation}
\label{eq:Mul4}
\Big\|\big(\sum_{j=1}^d |\mu_j *f|^2\big)^{1/2}\Big\|_{L^p}\le C_{p}(\sigma(G),Q(G)) \|f\|_{L^p},
\end{equation}
for $p\in[2,\infty)$, which was achieved by interpolating between its $p=2$ and $p=\infty$ endpoints.

As it has been mentioned above this approach resulted in dimension-free estimates for $C_p(d, B^q)$ for all $p\in(1, \infty]$ and $q\in[1, \infty)$, since in these cases the geometric invariants   $\sigma(B^q)$ and $Q(B^q)$ turned out to be independent of $d\in\NN$. For $q=\infty$ one obtains $Q(B^{\infty})=d^{1/2}$, which resulted in no further progress for the Hardy--Littlewood maximal function for the cube.

However, for $q=\infty$ the first author observed \cite{B3} by a careful inspection of M\"uller's proof, that \eqref{eq:Mul4} for $p=2$ can be estimated by a constant, which depends only on $\sigma(B^{\infty})$, and the dependence on $Q(B^{\infty})$ enters in  \eqref{eq:Mul4} only for $p=\infty$. Therefore, instead of interpolating between $p=2$ and $p=\infty$ in \eqref{eq:Mul4} it was natural to try, loosely speaking, to bound \eqref{eq:Mul4} for $p=q$ with large $q\ge2$, and then interpolate with the improved estimate for $p=2$, to obtain \eqref{eq:Mul4} with the implied constant depending only on $p$ and  $\sigma(B^{\infty})$.   
In \cite{B3}, in the proof of \eqref{eq:Mul4} for $p=q$ with large $q\ge2$  the explicit formula for the multiplier 
\begin{equation*}
m^{B^{\infty}}(\xi)=\prod_{j=1}^d \frac{\sin (\pi \xi_j)}{\pi\xi_j},\quad \text{for}\quad \xi \in \RR^d
\end{equation*}
was essential. From Theorem \ref{prop:1} we have seen that $|m^{B^{\infty}}(\xi)|\le C|\xi|^{-1}$. However,  $m^{B^{\infty}}(\xi)$, for most of $\xi$, decays much faster than $|\xi|^{-1}$  and the worst case 
happens only for $\xi$ in narrow conical regions along the coordinate
axes. This observation was implemented by making suitable localizations on the frequency space. An important ingredient, necessary to make these arguments rigorous in \cite{B3}, was Pisier's holomorpic semigroup theorem \cite{Pis1}. The arguments presented in \cite{B3} are based on a very explicit analysis which does not immediately carry over to other convex symmetric bodies. Therefore, new methods will need to be invented to understand the growth of $C_p(d, G)$, as $d\to\infty$, in inequality \eqref{eq:109} for general symmetric convex bodies $G\subset\RR^d$  when $p\in(1, 3/2]$.

\subsection{Weak type $(1,1)$ considerations}
So far we have only discussed  the question of dimension-free estimates on $L^p(\RR^d)$ spaces for $p\in(1, \infty]$. However, one may ask about a dimension-free bound for the best constant $C_1(d,G)$ in the weak type $(1,1)$ estimate
\begin{equation}
\label{eq:weak11}
\sup_{\lambda>0}\lambda\big|\big\{x\in \RR^d\colon \sup_{t>0}|M_t^G f(x)|>\lambda\big\}\big|
\le C_1(d,G)\|f\|_{L^1}.
\end{equation}
Appealing to the Vitali covering lemma one can easily show that $C_1(d,G)\le 3^d$. 
In \cite{SteinStro} the third author and Str\"omberg proved that for general symmetric convex bodies $G\subset\RR^d$ one has
\begin{equation}
\label{eq:StStr1}
C_1(d,G)\le Cd\log d,
\end{equation}
where $C>0$ is a universal constant independent of $d\in\NN$. This is the best known result to date, see also \cite{NT} for generalizations of \eqref{eq:StStr1}. The proof of inequality \eqref{eq:StStr1} is based on a rather complicated variant of the Vitali covering idea. The authors in \cite{SteinStro}  were also able to sharpen this estimate in the case of the Euclidean balls  by proving
\begin{equation}
\label{eq:StStr2}
C_1(d,B^2)\le Cd,
\end{equation}
with a universal constant $C>0$ independent of the dimension. For justifying \eqref{eq:StStr2} the authors used a comparison with the heat semigroup together with the Hopf maximal ergodic theorem, see \cite{Ste1}.

Now, in view of these results a natural question arises, whether we can take a dimension-free constant in \eqref{eq:StStr1} and \eqref{eq:StStr2}. This was resolved in the case of the cube $G=B^{\infty}$ by Aldaz \cite{Ald1} who proved that
\begin{equation}
\label{eq:Ald1}
C_1(d,B^{\infty})\ge C_d,
\end{equation}
where $C_d$ is a constant that tends to infinity as $d\to \infty$. The constant $C_d$  was made more explicit by Aubrun \cite{Aub1}, who proved \eqref{eq:Ald1} with $C_d\simeq_{\varepsilon}(\log d)^{1-\varepsilon}$ for every $\varepsilon>0$,  and by Iakolev and Str\"omberg \cite{IakStr1}, who considerably improved the latter lower bound by showing that  $C_d\simeq d^{1/4}$. The arguments in the papers \cite{Ald1}, \cite{Aub1}, \cite{IakStr1}, were based on careful analysis of a discretized version of the initial problem. The function $f$ realizing the supremum was then chosen as an appropriate sum of Dirac's deltas.     

The case of the cube is the only one where we have a definitive answer on the size of $C_1(d,G)$ in \eqref{eq:weak11}. Remarkably even in the case of the Euclidean ball $B^2$ it is unknown whether the weak type $(1,1)$ constant is dimension-free.

\section{Overview of the methods of the paper}
\label{sec:3}
This section is intended to  present a new flexible approach, which recently resulted in dimension-free bounds in $r$-variational and jump inequalities corresponding to the operators $M_t^G$ from  \eqref{eq:43}, see \cite{BMSW1} and \cite{MSZ1}. An important feature of this method is that it is also applicable to the discrete settings, see \cite{BMSW3,MSZ1}.

For clarity of exposition  we shall only be working with maximal functions on $L^p(\RR^d)$ or $\ell^p(\ZZ^d)$. For a more abstract setting we refer to \cite{MSZ1}, and also \cite{MSZ0}.

\subsection{Continuous perspective}
We shall briefly  outline the method of the proof of Theorem \ref{thm:4}. The proof of \eqref{eq:43} is based on the following simple decomposition 
\begin{align}
\label{eq:20}
\sup_{t>0}|M_t^Gf|\le\sup_{n\in\ZZ}|M_{2^n}^Gf|
+\Big(\sum_{n\in\ZZ}\sup_{t\in[2^n, 2^{n+1}]}|(M_t^G-M_{2^n}^G)f|^2\Big)^{1/2}.
\end{align}
In other words, the full maximal function corresponding to the operators $M_t^G$  is controlled by the dyadic maximal function and the square function associated with maximal functions restricted to dyadic blocks.

The estimates, on $L^p(\RR^d)$ for $p\in(1, \infty]$,  of the dyadic maximal function (in fact inequality \eqref{eq:44}) are based, upon comparing  $\sup_{n\in\ZZ}|M_{2^n}^Gf|$ with the Poisson semigroup $P_t$, see \eqref{eq:31},  on a variant of bootstrap argument. The idea of the bootstrap goes back to \cite{NSW}, where the context of differentiation in lacunary directions was studied. Later on, these ideas were used in many other papers \cite{Car2}, \cite{DuoRu}, including their applications in dimension-free estimates \cite{Car1}. Recently, it turned out that certain variant of bootstrap arguments may be also used to obtain dimension-free estimates in $r$-variational inequalities \cite{BMSW1,BMSW3} and in jump inequalities \cite{MSZ1}. In the latter paper applications to the operators of Radon type are discussed as well. The methods of  \cite{MSZ1}, presented as a part of an abstract theory, immediately give the desired conclusion. However, in Section \ref{sec:5}, for the sake of clarity, we give a simple direct proof and deduce \eqref{eq:44} from inequality \eqref{eq:27}, which immediately leads to a bootstrap inequality in  \eqref{eq:32}. In particular three tools, with dimension-free estimates, that we now highlight are used to obtain \eqref{eq:27}:
\begin{enumerate}[label*={\arabic*}.]
\item The maximal inequality \eqref{eq:47'} for the Poisson semigroup $P_t$.
\item The Littlewood--Paley inequality \eqref{eq:6} associated with the Poisson projections $S_n$.
\item The estimates of the Fourier multiplies corresponding to $M_t^G$ from Theorem \ref{prop:1}.
\end{enumerate}
The details are given in the second part of Section \ref{sec:5}. In the third part of Section \ref{sec:5} we estimate, 
on $L^p(\RR^d)$ for $p\in(3/2, 2]$, the square function from \eqref{eq:20}. In order to do so, we shall employ  an elementary numerical inequality, as in \cite{BMSW1, BMSW3}, see also \cite{MSZ1}, which asserts that for every
$n\in\ZZ$ and for every function
$\mathfrak a:[2^n, 2^{n+1}]\to\CC$  we have
\begin{align}
\label{eq:24}
\sup_{t\in[2^n, 2^{n+1}]}|\mathfrak a(t)-\mathfrak a(2^n)|
&\le \sqrt{2} \sum_{l\in\NN_0} \Big( \sum_{m = 0}^{2^{l}-1} \big|\mathfrak a(2^n+{2^{n-l}(m+1)})
-\mathfrak a(2^n+{2^{n-l}m})\big|^2 \Big)^{1/2}.
\end{align}

The inequality is the crucial new ingredient, which on the one hand, replaces the fractional integration argument from \cite{Car1}.
This  is especially important in the discrete setting as it is not clear, due to the lack of the dilation structure on $\ZZ^d$, whether the fractional integration argument is available there.
On the other hand, \eqref{eq:24} reduces estimates for a supremum (or even for $r$-variations, see \cite{MSZ1}) restricted to a dyadic block to the situation of certain square functions, where the division intervals over which differences are taken (in these square functions) are all of the same size, see inequality  \eqref{eq:23}. 

A variant of  inequality \eqref{eq:24} was proved by Lewko--Lewko \cite[Lemma 13]{LL}, and it was used to study variational Rademacher--Menshov type results for orthonormal systems. Inequality \eqref{eq:24}, essentially in this form, was independently obtained in \cite[Lemma 1]{MT1} by the second author and Trojan  in the context of $r$-variational estimates for discrete Radon transforms, see also \cite{MTS1,MSZ2}.

Upon applying inequality \eqref{eq:24} to control the square function from \eqref{eq:20} the problem is reduced  to control a new square function like in \eqref{eq:34}. The problem now is well suited to an application of the Fourier transform methods, and the estimates from Theorem \ref{prop:1} combined with the Littlewood--Paley inequality do the job and we obtain the desired claim.

The approach described above does not allow us to improve the range for $p\in(3/2, \infty]$ in the inequality from \eqref{eq:43}. To see this, it suffices to consider  the maximal function corresponding to the spherical means in $\RR^3$, see \eqref{eq:42}. Indeed, adopting the method from Section \ref{sec:5}  we obtain that the spherical maximal function is bounded on $L^p(\RR^3)$ for every $p\in(3/2, \infty]$, but unbounded on $L^{3/2}(\RR^3)$, see \cite{Ste0}.    

Any extension of the range $p\in(3/2, \infty]$ in \eqref{eq:43} will require more refined information besides the positivity of the operators $M_t^G$ and estimates of the Fourier multipliers $m_t^G$ from Theorem \ref{prop:1}. To be more precise, assume that $p_0\in(1, 2]$ and let $\alpha=1/p_0<1$. Suppose that there is a constant $C_{p_0}>0$ independent of the dimension $d\in\NN$ such that for every $t>0$ and $h\in(0, 1)$ and for every $f\in L^{p_0}(\RR^d)$ the following H\"older continuity condition holds 
\begin{align}
\label{eq:2}
\|(M_{t+h}^G-M_t^G)f\|_{L^{p_0}}\le C_{p_0}\bigg(\frac{h}{t}\bigg)^{\alpha}\|f\|_{L^{p_0}}.
\end{align}
Then, as it was proved in \cite{MSZ1} using a certain bootstrap argument, for every $p\in(p_0, 2]$ we have
\begin{align}
\label{eq:3}
C_p(d, G)\lesssim_{p}1,
\end{align}
with the implicit constant independent of the dimension. Therefore, the general problem is reduced to understand \eqref{eq:2}. In the case of $q$-balls $G=B^q$ for $q\in[1, \infty]$, inequality \eqref{eq:2}, and consequently \eqref{eq:3}, can be verified as it was shown in \cite{BMSW1,MSZ1}. The general case is reduced, anyway, to understand the norm $\|T_{(\xi\cdot\nabla)^{\alpha}m^G}\|_{L^p\to L^p}$ as in M\"uller's proof \cite{Mul1}. But, as we said before, this will  need new ideas.

\subsection{Discrete perspective}
As we have seen in the introduction the dimension-free estimates in the discrete setting for $\mathcal C_p(d, G)$ may be very hard, and in general there is no obvious conjecture to prove. However, for the $q$-balls $G=B^q$ as in \eqref{eq:88}, in view of the methods presented above, the problem may be reduced to  estimates of the Fourier multipliers.
For $q\in[1, \infty]$, let $\mathfrak m_N^{B^q}$ be the multiplier corresponding to the operator $\mathcal M_N^{B^q}$ as in \eqref{eq:85}. Let us define the proportionality factor
\begin{align*}
\kappa_q(d, N)=Nd^{-1/q},
\end{align*}
which can be identified with the isotropic constant corresponding to $B^q_N$, if the normalization assumption $|B^q|=1$ in definition \eqref{eq:iso} is dropped.   If we could prove that there exists a constant $C_q>0$ independent of the dimension $d\in\NN$ such that for every $N\in\NN$ and $\xi\in\TT^d$ we have
        \begin{align}
        \label{eq:49}
        \begin{split}
|\mathfrak m_N^{B^{q}}(\xi)-1|&\le C_q\kappa_q(d, N)|\xi|,\\
         |\mathfrak m_N^{B^{q}}(\xi)|&\le C_q(\kappa_q(d, N)|\xi|)^{-1},\\
          |\mathfrak m_{N+1}^{B^{q}}(\xi)-\mathfrak m_{N}^{B^{q}}(\xi)|&\le C_qN^{-1},
          \end{split}
          \end{align}
where $|\xi|$ denotes the Euclidean norm restricted to the torus $\TT^d\equiv[-1/2, 1/2)^d$; then,  using the methods from the proof of Theorem \ref{thm:4}, we would be able to conclude that the best constant $\mathcal C_p(d, B^q)$ in inequality \eqref{eq:86} is bounded independently of the dimension for every $p\in(3/2, \infty]$. 

Therefore, the problem of estimating $\mathcal C_p(d, B^q)$ with bounds independent of the dimension is reduced to establishing \eqref{eq:49}. Even though, estimates \eqref{eq:49} can be thought of as discrete analogues of the estimates for the continuous multipliers $m_t^{G}$, from Theorem \ref{prop:1} with $G=B^q$, the method of the proof of Theorem \ref{prop:1} is not applicable to derive \eqref{eq:49}.  For $q\in[1, \infty)$ the question seems to be very hard due to the lack of reasonable estimates for the number of lattice points in the sets $B^q_N$.

However, if $q=\infty$ then $B^{\infty}_N=[-N, N]^d$ is a cube. Thus  the number of lattice points is not a problem any more, and we easily have $|B^{\infty}_N\cap\ZZ^d|=(2N+1)^d$. This property distinguishes the cubes from the  $q$-balls for $q\in[1, \infty)$. Using the product structure of the cubes  we were able to analyze  the behavior of the multiplier
$\mathfrak m_N^{B^{\infty}}$ associated with the operator $\mathcal
M_N^{B^{\infty}}$ and obtain \eqref{eq:49}, see \cite{BMSW3} for more details. The multiplier
$\mathfrak m_N^{B^{\infty}}$ is an exponential sum, which is the product of
one dimensional Dirichlet's kernels. The explicit formula for
$\mathfrak m_N^{B^{\infty}}$ in terms of the Dirichlet kernels was essential for our approach  and permitted us to establish \eqref{eq:49} for $q=\infty$ with  $\kappa_{\infty}(d, N)=N$. Applying \eqref{eq:49} we showed in \cite{BMSW3}, as it was mentioned in the introduction,  that for every $p\in(3/2, \infty]$ there is a constant $C_p>0$ independent of the dimension such that $\mathcal C_p(d, B^{\infty})\le C_p$. Moreover, if the supremum in \eqref{eq:86} is restricted to the dyadic set $\mathbb D$, then \eqref{eq:86} holds for all $p\in(1, \infty]$ and  $\mathcal C_p(d, B^{\infty})$ is independent of the dimension as well. The inequalities in \eqref{eq:49}, for $q=\infty$, are based on elementary estimates, which are interesting in their own right. For this reason our method does not extend to discrete convex bodies other than $B^{\infty}$. This is the second place which sets the operators $\mathcal M_N^{B^{\infty}}$ over the cubes apart from the operators $\mathcal M_N^{B^q}$ over the $q$-balls for $q\in[1, \infty)$.

Now it is desirable to understand whether inequalities \eqref{eq:49} hold for $q\in[1, \infty)$. The absence  of
the product structure for $q\in[1, \infty)$ makes the estimates incomparably harder. However, using crude estimates for the number of lattice points in the $q$-balls $B^q_N$, if $p\in(1, \infty]$ and $q\in[1, \infty]$, we obtain, as in \cite{BMSW3}, that there is  $C_{p,q}>0$ independent of the dimension $d\in\NN$ such that  for all $f\in\ell^p(\ZZ^d)$ we have
\begin{align}
\label{eq:4}
\big\|\sup_{N\ge d^{1+1/q}}|\mathcal M_N^{B^q} f|\big\|_{\ell^p}
\leq C_{p,q} \|f\|_{\ell^p}.
\end{align}

Inequality \eqref{eq:4} follows from a simple comparison argument, which permits us to dominate the $\ell^p(\ZZ^d)$ norm   of the maximal function $\sup_{N\ge d^{1+1/q}}|\mathcal M_N^{B^q} f|$ by a constant multiple of $C_p(d, B^q)$, which we know is independent of the dimension for every $p\in(1, \infty]$ due to \cite{Mul1} for $q\in[1, \infty)$, and due to \cite{B3} for $q=\infty$.

In Section \ref{sec:6}, for $q=2$, we shall extend the range in the supremum in \eqref{eq:4} and we show that  $d^{1+1/q}=d^{3/2}$, (for $q=2$), can be replaced by a constant multiple of $d$, see  Theorem \ref{thm:3}. Our argument is a subtle refinement of the arguments from \cite{BMSW3}. Even though, we will also use crude estimates for the number of lattice points in the balls $B_N^2$, the essential improvement comes from the fact that the Euclidean norm corresponds to the scalar product $|x|^2=\langle x, x\rangle$. See Lemma \ref{lem:1} and Lemma \ref{lem:2}, where this observation plays the key role. The rest of the argument reduces the problem to the comparison of the $\ell^p(\ZZ^d)$ norm of $\sup_{N\ge Cd}|\mathcal M_N^{B^2}f|$ with $C_p(d, B^2)$, which is independent of the dimension for all $p\in(1, \infty]$.
Now the matters are reduced to understand $\sup_{1\le N\le Cd}|\mathcal M_N^{B^2}f|$.

In \cite{BMSW2} the authors initiated investigations in this direction and the case of the discrete Euclidean balls with dyadic radii was studied. We obtained Theorem \ref{thm:0}, which gives us some evidence that inequality \eqref{eq:86} with dimension-free bounds in not entirely hopeless, at least for $q=2$. The methods of the proof of Theorem \ref{thm:0} shed a new light on the  general problem \eqref{eq:86}, but the best what we can do for the full maximal function at this moment is Theorem \ref{thm:3}, and new methods will surely need to be  invented to attack this case.

The proof of Theorem \ref{thm:0} is based on the the estimates for $\mathfrak m_N^{B^2}$, which in turn are based on delicate combinatorial arguments that differ completely from the methods used to obtain  estimates \eqref{eq:49} for $\mathfrak m_N^{B^{\infty}}$. In particular, we proved analogues of the first two inequalities from \eqref{eq:49} for  $\mathfrak m_t^{B^{2}}$. However, the second inequality is perturbed by a negative power of $\kappa_2(d, N)$, which makes our method limited to the dyadic scales, and nothing reasonable beyond $\ell^2(\ZZ^d)$ theory can be said in \eqref{eq:90}. Our aim now is to understand whether the second estimate can be improved. If we succeeded in doing so, we could extend inequality \eqref{eq:90} to $\ell^p(\ZZ^d)$ spaces  for all $p\in(1, \infty]$. The second task, which seems to be quite challenging, is to obtain  the third inequality in \eqref{eq:49} for the multiplier $\mathfrak m_N^{B^{2}}$. This inequality, if proved, would allow us to think about dimension-free estimates of $\mathcal C_p(d, B^2)$ for all $p\in(3/2, \infty]$. We refer to \cite{BMSW2} for more details.

\section{Continuous perspective:  proof of Theorem \ref{thm:4}}
\label{sec:5}
The purpose of this section is to provide dimension-free estimates on $L^p(\RR^d)$,  with $p\in(3/2, \infty]$, for the Hardy--Littlewood maximal function  associated with convex symmetric bodies in $\RR^d$. However, we begin with the proof of Theorem \ref{prop:1}, which will allow us to build up the $L^2(\RR^d)$ theory in Theorem \ref{thm:4}.  

\subsection{Fourier transform estimates: proof of Theorem \ref{prop:1}}

For $\zeta\in \mathbb S^{d-1}$ and $u\in\RR$ we define the set 
\[
A_{\zeta}(u)=\{x\in G\colon x\cdot \zeta =u\},
\]
and an even and
compactly supported function by setting
\begin{align*}
\varphi^G_{\zeta}(u)={\rm Vol}_{d-1}(A_{\zeta}(u)),
\end{align*}
where ${\rm Vol}_{d-1}$ denotes $(d-1)$-dimensional Lebesgue
measure. We observe that for all $\lambda\in[0, 1]$ and for all
$u, v\in\RR$ such that $\varphi^G_{\zeta}(u)\not=0$ and
$\varphi^G_{\zeta}(v)\not=0$ we obtain
\begin{align}
\label{eq:52}
{\rm Vol}_{d-1}(A_{\zeta}(\lambda u +(1-\lambda) v))^{\frac{1}{d-1}}\ge
\lambda{\rm Vol}_{d-1}(A_{\zeta}(u))^{\frac{1}{d-1}}+(1-\lambda) {\rm Vol}_{d-1}(A_{\zeta}(v))^{\frac{1}{d-1}}.
\end{align}
This can be verified using Brunn--Minkowski's inequality (in
dimension $(d-1)$),
since, by convexity of $G$,  for every $u,v\in\RR$ if $A_{\zeta}(u)\not=\emptyset$ and
$A_{\zeta}(v)\not=\emptyset$ then 
\begin{align}
\label{eq:51}
\la A_{\zeta}(u)+(1-\lambda)A_{\zeta}(v)\subseteq A_{\zeta}(\lambda u +(1-\lambda) v).
\end{align}
For $\zeta\in \mathbb S^{d-1}$ define
$S_{\zeta}=\{x\in\RR\colon \varphi_\zeta^G(x)\not=0\}$. If
$u_0\in S_{\zeta}^c$ then, using \eqref{eq:51}, it is not difficult to
see that for every $u\in\RR$ such that $|u|>|u_0|$ we have
$\varphi_\zeta^G(u)=0$. This ensures that $S_{\zeta}$ is a symmetric
interval contained in $[-u_{\zeta}, u_{\zeta}]$, where
$u_{\zeta}=\sup\{x\ge0\colon\varphi_\zeta^G(x)\not=0\}$. Taking $v=-u$
in \eqref{eq:52} we obtain that
$\varphi_\zeta^G((2\lambda-1)u)\ge \varphi_\zeta^G(u)$ for all
$\lambda\in[0, 1]$ and $u\in\RR$. This implies that $\varphi_\zeta^G$
is decreasing on $S_{\zeta}\cap[0,\infty)$ as well as on $[0,\infty)$.
Inequality \eqref{eq:52} shows that the function
$(\varphi_\zeta^G)^{\frac{1}{d-1}}$ is concave on $S_{\zeta}$. In
particular, $\varphi_\zeta^G$ is differentiable almost everywhere in
$(-u_{\zeta}, u_{\zeta})$, since it is absolutely continuous on each
closed interval contained in $(-u_{\zeta}, u_{\zeta})$.  The
inequality between the weighted arithmetic and geometric means
together with \eqref{eq:52} also implies the log-concavity of
$\varphi_\zeta^G$. Namely, for $\lambda\in[0, 1]$ and $u,v>0$ we have
\begin{equation*}
\varphi_\zeta^G(\lambda u + (1-\lambda)v)\ge \varphi_\zeta^G(u)^{\lambda}\varphi_\zeta^G(v)^{1-\lambda}.
\end{equation*}
Note that using Fubini's theorem we have, for $\xi\in \RR^d\setminus\{0\}$, that
\begin{equation}
\label{eq:mGsect}
m^G(\xi)=\int_{\RR}\varphi^G_{\xi/|\xi|}(u)e^{2\pi i |\xi| u}{\rm d}u.\end{equation}
More generally, for any $h\in L^{\infty}(\RR)$ and $\xi \in \RR^d\setminus\{0\}$, one has 
\begin{equation}
\label{eq:mGsecth}
\int_G h(x\cdot\xi){\rm d}x=\int_{\RR}\varphi^G_{\xi/|\xi|}(u)h(|\xi| u){\rm d}u.
\end{equation}

From the above properties of $\varphi^G_{\zeta}$ we shall deduce, as in \cite[Lemma 1]{B1}, that 
\begin{equation}
\label{eq:fiinf}
\varphi^G_{\zeta}(u)\le 2\varphi^G_{\zeta}(0)e^{- \varphi^G_{\zeta}(0)|u|},\quad\text{for all}\quad u\in \RR, \quad\text{and}\quad\zeta\in\mathbb S^{d-1}.
\end{equation}
For this purpose, we fix $\zeta\in\mathbb S^{d-1}$ and let us consider the function
$\psi^G_{\zeta}(u)=\varphi^G_{\zeta}(0)e^{- \varphi^G_{\zeta}(0)|u|}$,
whose logarithm is a linear function. We have that $\varphi^G_{\zeta}(0)=\psi^G_{\zeta}(0)$, and suppose that 
there is a point $u_0\in(0, \infty)$ such that
$\varphi^G_{\zeta}(u_0)=\psi^G_{\zeta}(u_0)$. By the
log-concavity we obtain that
\begin{align*}
\varphi^G_{\zeta}(u)\le \psi^G_{\zeta}(u), \quad \text{for }\quad u>u_0,
\end{align*}
and in this case there is nothing to do. Moreover, the log-concavity also gives 
\begin{align*}
\varphi^G_{\zeta}(u)\ge \psi^G_{\zeta}(u), \quad \text{for }\quad 0\le u\le u_0.
\end{align*}
In this case, using \eqref{eq:mGsecth} with $h(u)=\ind{[0,\infty)}(u)$ we obtain
\begin{align}
\label{eq:61}
  \frac12=\int_0^{\infty}\varphi^G_{\zeta}(u){\rm d}u
  \ge \varphi^G_{\zeta}(0)\int_{0}^{u_0}e^{-u\varphi^G_{\zeta}(0)}{\rm d}u
  =\int_{0}^{u_0 \varphi^G_{\zeta}(0)}e^{-u}{\rm d}u=1-e^{-u_0\varphi^G_{\zeta}(0)},
  \end{align} 
  and, consequently, $e^{-u_0\varphi^G_{\zeta}(0)}\ge 1/2$, so that
  $u_0\varphi^G_{\zeta}(0)\le \log2$. Hence, \eqref{eq:fiinf} follows,
  since
  \begin{align*}
  \varphi^G_{\zeta}(u)\le \varphi^G_{\zeta}(0)e^{(u_0-u)\varphi^G_{\zeta}(0)}
  \le 2\varphi^G_{\zeta}(0)e^{-u\varphi^G_{\zeta}(0)},\quad\text{for}\quad 0\le u\le u_0.
  \end{align*}
  If $u=0$ is the unique point such that
  $\varphi^G_{\zeta}(0)=\psi^G_{\zeta}(0)$, then
  $\varphi^G_{\zeta}(u)\le \psi^G_{\zeta}(u)$ or
  $\varphi^G_{\zeta}(u)\ge \psi^G_{\zeta}(u)$ for all
  $u\in S_{\zeta}$. If the first inequality holds then we are done, so
  we may assume that the second inequality is true. Arguing in a
  similar way as in \eqref{eq:61} with $u_{\zeta}$ in place of $u_0$
  we obtain that $u_{\zeta}\varphi^G_{\zeta}(0)\le \log2$, and consequently
  \begin{align*}
  \varphi^G_{\zeta}(u)\le \varphi^G_{\zeta}(0)e^{(u_{\zeta}-u)\varphi^G_{\zeta}(0)}
  \le 2\varphi^G_{\zeta}(0)e^{-u\varphi^G_{\zeta}(0)},\quad\text{for}\quad 0\le u\le u_{\zeta}.
  \end{align*}
  Hence \eqref{eq:fiinf} follows, since $\varphi^G_{\zeta}(u)=0$ for $u\in S_{\zeta}^c$.

 Since $G$ is in the isotropic position we can also prove that $\varphi^G_{\zeta}(0)$ is of the same order, uniformly in $\zeta \in \mathbb S^{d-1}.$ More precisely, as in \cite[Lemma 2]{B1}, we have  
\begin{equation}
\label{eq:fiL}
\frac{3}{16}\le L \,\varphi^G_{\zeta}(0)\le 3,\quad\text{for every}\quad \zeta\in \mathbb S^{d-1},
\end{equation}  
where $L$ is the isotropic constant. To prove the right-hand side inequality in \eqref{eq:fiL} we show, with the aid of \eqref{eq:mGsecth} (for $h(u)=u^2$) and \eqref{eq:fiinf}, that
\begin{align*}
L^2=\int_{\RR} u^2 \varphi^G_{\zeta}(u){\rm d}u
\le 4 \varphi^G_{\zeta}(0)\int_{0}^{\infty}u^2 e^{-\varphi^G_{\zeta}(0)u}{\rm d}u
\le 8\varphi^G_{\zeta}(0)^{-2}. 
\end{align*}
For the left-hand side  inequality in \eqref{eq:fiL} we calculate
\begin{align*}
1=\int_{ \RR}\varphi^G_{\zeta}(u){\rm d}u
\le 4L\varphi^G_{\zeta}(0)+\frac{1}{4L^2}\int_{ |u|\ge 2L} u^2\varphi^G_{\zeta}(u){\rm d}u
\le 4L \varphi^G_{\zeta}(0)+\frac 14,
\end{align*}
which implies $L\varphi^G_{\zeta}(0)\ge 3/16$, and \eqref{eq:fiL} is justified.
We now pass to the proof of Theorem \ref{prop:1}.

\begin{proof}[Proof of Theorem \ref{prop:1}]
We begin with the proof of inequalities in \eqref{eq:69}. For $\xi\in\RR^d\setminus\{0\}$ we set
$\zeta=\xi/|\xi|$, then integration by parts
allows us to rewrite \eqref{eq:mGsect} as
\begin{align*}
m^G(\xi)=\int_{\RR}\varphi_{\zeta}^G(u)\cos(2\pi |\xi| u){\rm d}u
=\lim_{u\nearrow u_{\zeta}}\frac{\varphi_{\zeta}^G(u)\sin(2\pi |\xi|u)}{\pi|\xi|}
-\frac{1}{2\pi|\xi|}\int_{-u_{\zeta}}^{u_{\zeta}}(\varphi_{\zeta}^G)'(u)\sin(2\pi |\xi|u){\rm d}u.
\end{align*}
Then using \eqref{eq:fiL} we obtain the first inequality in  \eqref{eq:69}, since
\begin{align*}
|m^G(\xi)|&\le(\pi|\xi|)^{-1}\varphi_{\zeta}^G(0)+ (2\pi|\xi|)^{-1} \int_{-u_{\zeta}}^{u_{\zeta}}|(\varphi_{\zeta}^G)'(u)|{\rm d}u\\
&=(\pi|\xi|)^{-1}\varphi_{\zeta}^G(0)-(\pi|\xi|)^{-1}\int_{0}^{u_{\zeta}}(\varphi_{\zeta}^G)'(u){\rm d}u\\
&\le 6\pi^{-1} (L|\xi|)^{-1}.
\end{align*}
To prove the second inequality in  \eqref{eq:69}, we use \eqref{eq:fiinf} and \eqref{eq:fiL} to write
\begin{align*}
|m^G(\xi)-1|&\le\int_{\RR}\varphi_{\zeta}^G(u) |\cos(2\pi |\xi|u) -1|{\rm d}u\\
&\le  4\pi |\xi| \int_{0}^{\infty}u\varphi_{\zeta}^G(u){\rm d}u\\
&\le 8\pi|\xi|\varphi_{\zeta}^G(0)^{-1} \\
&\le 45\pi(L|\xi|).
\end{align*}
This completes the proof of \eqref{eq:69}. To justify \eqref{eq:69'}, we use \eqref{eq:mGsecth} and integrate by parts to get
\begin{align*}
\langle\xi,\nabla m^G(\xi)\rangle
&=\int_G2\pi i  \langle x, \xi\rangle e^{2\pi i x\cdot \xi}{\rm d}x\\
&=\int_{-u_{\zeta}}^{u_{\zeta}} (2\pi i|\xi|e^{2\pi i u|\xi|})\,(u\varphi_{\zeta}^G(u)){\rm d}u\\
&=\lim_{u\nearrow u_{\zeta}}\big(e^{2\pi i u|\xi|}u\varphi_{\zeta}^G(u)\big)-\lim_{u\searrow -u_{\zeta}}\big(e^{2\pi i u|\xi|}u\varphi_{\zeta}^G(u)\big) -\int_{-u_{\zeta}}^{u_{\zeta}}e^{2\pi i u|\xi|}\frac{{\rm d}}{{\rm d}u}(u\varphi_{\zeta}^G(u)){\rm d}u.
\end{align*}
This leads, in view of \eqref{eq:fiinf}, to the estimate
\begin{align*}
|\langle\xi,\nabla m^G(\xi)\rangle|&\le 4u_{\zeta}\varphi^G_{\zeta}(0)e^{- \varphi^G_{\zeta}(0)u_{\zeta}}
+\int_{-u_{\zeta}}^{u_{\zeta}} \varphi_{\zeta}^G(u){\rm d}u
+\int_{-u_{\zeta}}^{u_{\zeta}} |u||(\varphi_{\zeta}^G)'(u)|{\rm d}u\\
&\le5-2\int_0^{u_{\zeta}}u(\varphi_{\zeta}^G)'(u){\rm d}u,
\end{align*}
where we used the fact that $\varphi_{\zeta}^G(u)$ is decreasing in $u$.
Hence, integrating by parts once again we reach $|\langle\xi,\nabla m^G(\xi)\rangle|\le 10$, which gives \eqref{eq:69'}.
The proof of Theorem \ref{prop:1} is completed. 
\end{proof}

The approach we shall use to prove Theorem \ref{thm:4} was presented as a part of an abstract theory in \cite{MSZ1}. The method has recently  found many applications in $r$-variational and jump estimates (including dimension-free estimates) in the continuous and discrete settings, see \cite{BMSW1,BMSW3,MSZ0,MSZ1}. However here, for the sake of clarity, we shall only focus our attention on the maximal functions in the continuous setup.

Since we are working with a family of averaging operators only the range for $p\in(3/2, 2]$ will be interesting in Theorem \ref{thm:4}. The range for $p\in(2, \infty]$ will follow then by a simple interpolation with the obvious $L^{\infty}(\RR^d)$ bound. For instance, in order to prove dimension-free bounds for the dyadic maximal function, it will suffice to show that for every $p\in(1, 2]$ and for every $f\in L^p(\RR^d)$ we have 
\begin{align}
\label{eq:25}
\big\|\sup_{n\in\ZZ}|M^G_{2^n}f|\big\|_{L^p}
\lesssim\|f\|_{L^p}.
\end{align}
In particular, \eqref{eq:25} proves inequality \eqref{eq:44} from Theorem \ref{thm:4}.
Then, in view of \eqref{eq:20}, the proof of inequality \eqref{eq:43} will be completed, if we show that for $p\in(3/2, 2]$ and for every $f\in L^p(\RR^d)$ we have 
\begin{align}
\label{eq:26}
\Big\|\big(\sum_{n\in\ZZ}\sup_{t\in[2^n, 2^{n+1}]}|(M_t^G-M_{2^n}^G)f|^2\big)^{1/2}\Big\|_{L^p}\lesssim\|f\|_{L^p}.
\end{align}
In the next two subsections we prove inequalities \eqref{eq:25} and \eqref{eq:26} respectively.

\subsection{Proof of inequality \eqref{eq:25}}
We fix $N\in\NN$ and define
\begin{align*}
B_p(N)=\sup_{\|f\|_{L^p}\le1}\big\|\sup_{|n|\le N}|M^G_{2^n}f|\big\|_{L^p}.
\end{align*}
We see that $B_p(N)\le2N+1$ for every $N\in\NN$, since $M_t^G$ is an averaging operator. Our aim will be to show that for every $p\in(1, 2]$ there is a constant $C_p>0$ independent of the dimension and the underlying  body $G\subset\RR^d$ such that
\begin{align}
\label{eq:33}
\sup_{N\in\NN}B_p(N)\le C_p.
\end{align}
Observe that, by \eqref{eq:47'}, we have
\begin{align}
\label{eq:31}
\begin{split}
\big\|\sup_{|n|\le N}|M^G_{2^n}f|\big\|_{L^p}&
\le \big\|\sup_{t>0}|P_{t}f|\big\|_{L^p}
+\big\|\sup_{|n|\le N}|(M^G_{2^n}-P_{2^n})f|\big\|_{L^p}\\
&\lesssim \|f\|_{L^p}+\sum_{j\in\ZZ}\Big\|\big(\sum_{|n|\le N}|(M^G_{2^n}-P_{2^n})S_{j+n}f|^2\big)^{1/2}\Big\|_{L^p},
\end{split}
\end{align}
where in the last line we have used decomposition from \eqref{eq:5}.
The proof of \eqref{eq:25} will be completed, if we show that for every $p\in(1, 2]$ there is $C_p'>0$ independent of $d$, $N$, and the body $G\subset\RR^d$ such that for every $j\in\ZZ$ and for every $f\in L^p(\RR^d)$ we have
\begin{align}
\label{eq:27}
\Big\|\big(\sum_{|n|\le N}|(M^G_{2^n}-P_{2^n})S_{j+n}f|^2\big)^{1/2}\Big\|_{L^p}\le C_p' (1+B_p(N))^{\frac{2-p}{2}}2^{-\frac{(p-1)|j|}{2}}\|f\|_{L^p}.
\end{align}
Assume momentarily that \eqref{eq:27} has been proven. Then combining \eqref{eq:31} with \eqref{eq:27} we obtain that
\begin{align}
\label{eq:32}
B_p(N)\lesssim_p 1+(1+B_p(N))^{\frac{2-p}{2}},
\end{align}
with the implicit constant independent of $d$, $N$ and the body $G\subset\RR^d$. Thus we conclude, using \eqref{eq:32}, that \eqref{eq:33} holds, and the proof of \eqref{eq:25} and consequently  \eqref{eq:44} from Theorem \ref{thm:4} is completed.
\subsubsection{Proof of inequality \eqref{eq:27} for $p=2$} Using Theorem \ref{prop:1} we show that \eqref{eq:27} holds  for $p=2$. Let $k(\xi)=m^G(\xi)-p_1(\xi)=m^G(\xi)-e^{-2\pi L|\xi|}$ be the multiplier associated with the operator $M_1^G-P_1$.  Observe that by Theorem \ref{prop:1} and the properties of $p_1(\xi)$ there exists a constant $C>0$ independent of the dimension and the body $G\subset\RR^d$ such that
  \begin{align}
\label{eq:30}
    |k(\xi)|\le C\min\big\{L|\xi|, (L|\xi|)^{-1}\big\},
    \end{align}
    where $L=L(G)$ is the isotropic constant as in \eqref{eq:iso}.
Now by \eqref{eq:30} and Plancherel's theorem we
get
\begin{align}
\label{eq:29}
  \begin{split}
  \Big\|\big(\sum_{n\in\ZZ}|(M^G_{2^n}&-P_{2^n})S_{j+n}f|^2\big)^{1/2}\Big\|_{L^2}\\
&=\Big(\int_{\RR^d}\sum_{n\in\ZZ}
\big|k(2^n\xi)\big(e^{-2\pi2^{n+j} L|\xi|}-e^{-2\pi2^{n+j-1} L|\xi|}\big)\big|^2
  |\mathcal Ff(\xi)|^2{\rm d}\xi\Big)^{1/2}\\
  &\lesssim2^{-|j|/2}\Big(\int_{\RR^d}\sum_{n\in
  \ZZ}\min\big\{2^nL|\xi|, (2^nL|\xi|)^{-1}\big\}
  |\mathcal Ff(\xi)|^2{\rm d}\xi\Big)^{1/2}\\
  &\lesssim 2^{-|j|/2}\|f\|_{L^2},
  \end{split}
  \end{align}
  with the implicit constant  independent of $d$, $N$ and the body $G\subset\RR^d$.
This proves \eqref{eq:27} for $p=2$. 
\subsubsection{Proof of inequality \eqref{eq:27} for $p\in(1, 2)$}
For $s\in(1, 2]$ and $r\in[1, \infty]$, let  $A_N(s, r)$ be the smallest constant in the following inequality
    \begin{align}
\label{eq:22}
\Big\|\big(\sum_{|n|\le N}|(M^G_{2^n}&-P_{2^n})g_n|^r\big)^{1/r}\Big\|_{L^s}
\le A_N(s, r)\Big\|\big(\sum_{|n|\le N}|g_n|^r\big)^{1/r}\Big\|_{L^s}.
    \end{align}
    It is easy to see that $A_N(s, r)<\infty$. Let $u\in(1, p)$ be such that $\frac{1}{u}=\frac{1}{2}+\frac{1}{2p}$.
Now it is not difficult to see that $A_N(1, 1)\lesssim 1$, since $\|(M^G_{2^n}-P_{2^n})f\|_{L^p}\leq 2\|f\|_{L^p}$. Moreover, by \eqref{eq:47'}, if $g=\sup_{|n|\le N}|g_n|$ then
\begin{align*}
  \big\|\sup_{|n|\le N}|(M^G_{2^n}-P_{2^n})g_n|\big\|_{L^p}\lesssim (B_p(N)+1)\|g\|_{L^p}.
\end{align*}
Hence by the complex interpolation we obtain
$$A_N(u, 2)\le A_N(1, 1)^{1/2}A_N(p, \infty)^{1/2}\lesssim (B_p(N)+1)^{1/2}.$$
Then by \eqref{eq:22} and \eqref{eq:6} we get
\begin{align}
\label{eq:28}
  \begin{split}
\Big\|\big(\sum_{|n|\le N}|(M^G_{2^n}-P_{2^n})S_{j+n}f|^2\big)^{1/2}\Big\|_{L^u}
&\le A_N(u, 2)\Big\|\big(\sum_{n\in\ZZ}|S_{j+n}f|^2\big)^{1/2}\Big\|_{L^u}\\
&\lesssim (B_p(N)+1)^{1/2}\|f\|_{L^u}.
  \end{split}
\end{align}
We now take  $\rho\in(0, 1]$ satisfying $\frac{1}{p}=\frac{1-\rho}{u}+\frac{\rho}{2}$,
then $\rho=p-1$ and  $1-\rho=2-p$. Interpolation between \eqref{eq:29} and \eqref{eq:28} yields
\eqref{eq:27} for $p\in(1, 2)$ as desired.

\subsection{Proof of inequality \eqref{eq:26}}
To estimate \eqref{eq:26} we use \eqref{eq:5} and \eqref{eq:24} and obtain
\begin{align}
\label{eq:23}
\begin{split}
\Big\|\big(\sum_{n\in\ZZ}\sup_{t\in[2^n, 2^{n+1}]}|(M_t^G&-M_{2^n}^G)f|^2\big)^{1/2}\Big\|_{L^p}\\
&\lesssim
\sum_{l\ge 0}\sum_{j\in\ZZ} \Big\|\big(\sum_{n\in\mathbb Z} \sum_{m = 0}^{2^{l}-1}\abs{(M^G_{2^n+{2^{n-l}(m+1)}} - M^G_{2^n+{2^{n-l}m}})S_{j+n}f}^2 \big)^{1/2}\Big\|_{L^p}.
\end{split}
\end{align}
Our aim now is to show that for every $q\in(1, 2)$ and $\theta\in[0, 1]$ such that  $\frac{1}{p}=\frac{\theta}{2}+\frac{1-\theta}{q}$ we have, for every  $f\in L^p(\RR^d)$, the following estimate
\begin{align}
\label{eq:34}
\begin{split}
\Big\|\big(\sum_{n\in\mathbb Z} \sum_{m = 0}^{2^{l}-1}
|(M^G_{2^n+{2^{n-l}(m+1)}} &- M^G_{2^n+{2^{n-l}m}})S_{j+n}f|^2 \big)^{1/2}\Big\|_{L^p}\\
&\lesssim 2^{-\theta l/2+(1-\theta)l}\min\big\{1, 2^{l}2^{-|j|/2}\big\}^{\theta}\|f\|_{L^p},
\end{split}
\end{align}
with the implicit constant independent of the dimension and the underlying  body $G\subset\RR^d$.

Assume momentarily that \eqref{eq:34} has been proven. Then we combine \eqref{eq:23} with \eqref{eq:34} and obtain estimate \eqref{eq:26}, since the double series  
\begin{align*}
\sum_{l\ge 0}\sum_{j\in\ZZ}2^{-\theta l/2+(1-\theta)l}\min\big\{1, 2^{l}2^{-|j|/2}\big\}^{\theta}\lesssim 1
\end{align*}
is summable, whenever $\theta /2-(1-\theta)>0$, which forces $p$ to satisfy $\frac{3}{1+1/q}< p\le 2$, due to $\theta=\frac{2}{p}\frac{p-q}{2-q}$. This completes the proof of \eqref{eq:43} from Theorem \ref{thm:4}.

\subsubsection{Proof of inequality \eqref{eq:34} for $p=2$} Using  inequalities  \eqref{eq:69} and arguing in a similar way as in \eqref{eq:29} we obtain 
\begin{align}
\label{eq:35}
\Big\|\big(\sum_{n\in\mathbb Z} \sum_{m = 0}^{2^{l}-1}\abs{(M^G_{2^n+{2^{n-l}(m+1)}} - M^G_{2^n+{2^{n-l}m}})S_{j+n}f}^2 \big)^{1/2}\Big\|_{L^2}\lesssim2^{l/2}2^{-|j|/2}\|f\|_{L^2}.
\end{align}
Note that inequality  \eqref{eq:69'} implies
\begin{align*}
|m^G((2^n+{2^{n-l}(m+1)})\xi)-m^G((2^n+{2^{n-l}m})\xi)|
\le\int_{2^n+{2^{n-l}m}}^{2^n+{2^{n-l}(m+1)}}|\langle t\xi,\nabla m^G(t\xi)\rangle|\frac{{\rm d}t}{t}
\lesssim 2^{-l}.
\end{align*}
Therefore, by Plancehrel's theorem 
\begin{align}
\label{eq:37}
\begin{split}
\Big\|\big(\sum_{n\in\mathbb Z} \sum_{m = 0}^{2^{l}-1}|(M^G_{2^n+2^{n-l}(m+1)} &- M^G_{2^n+2^{n-l}m})S_{j+n}f|^2 \big)^{1/2}\Big\|_{L^2}\\
&=\Big(\sum_{n\in\mathbb Z} \sum_{m = 0}^{2^{l}-1}\|(M^G_{2^n+{2^{n-l}(m+1)}} - M^G_{2^n+{2^{n-l}m}})S_{j+n}f\|_{L^2}^2\Big)^{1/2}\\
&\lesssim \Big(\sum_{n\in\mathbb Z} 2^{-l}\|S_{j+n}f\|_{L^2}^2\Big)^{1/2}\\
&\lesssim 
2^{-l/2}\|f\|_{L^2}.
\end{split}
\end{align}
Combining \eqref{eq:35} and \eqref{eq:37} we obtain
\begin{align}
\label{eq:38}
\Big\|\big(\sum_{n\in\mathbb Z} \sum_{m = 0}^{2^{l}-1}\abs{(M^G_{2^n+{2^{n-l}(m+1)}} - M^G_{2^n+{2^{n-l}m}})S_{j+n}f}^2 \big)^{1/2}\Big\|_{L^2}\lesssim 2^{-l/2}\min\big\{1, 2^{l}2^{-|j|/2}\big\}\|f\|_{L^2},
\end{align}
which proves \eqref{eq:34} for $p=2$.

\subsubsection{Proof of inequality \eqref{eq:34} for $p\in(3/2, 2)$}
We begin with a general remark,  a consequence of \eqref{eq:44}, which states that for every $q\in(1, \infty)$ there is a constant $C_q>0$ 
independent of the dimension and the underlying  body $G\subset\RR^d$ such that for every sequence $(g_n)_{n\in\ZZ}\in L^q(\ell^2(\RR^d))$ we have 
\begin{align}
\label{eq:40}
\Big\|\big(\sum_{n\in\mathbb Z} |M^G_{2^n}g_n|^2 \big)^{1/2}\Big\|_{L^q}
\le C_q
\Big\|\big(\sum_{n\in\mathbb Z} |g_n|^2 \big)^{1/2}\Big\|_{L^q}.
\end{align}
Indeed, let $A(q, r)$ be the best constant in the following inequality
\begin{align*}
\Big\|\big(\sum_{n\in\mathbb Z} |M^G_{2^n}g_n|^r \big)^{1/r}\Big\|_{L^q}
\le A(q, r)
\Big\|\big(\sum_{n\in\mathbb Z} |g_n|^r \big)^{1/r}\Big\|_{L^q}.
\end{align*}
By the complex interpolation and duality ($A(q, r)=A(q', r')$) and inequality \eqref{eq:44} we obtain
\begin{align*}
A(q, 2)\le A(q, 1)^{1/2}A(q, \infty)^{1/2}=A(q', \infty)^{1/2}A(q, \infty)^{1/2}\le C_{q'}^{1/2}C_{q}^{1/2},
\end{align*}
which implies \eqref{eq:40}.
Observe that by \eqref{eq:40} and \eqref{eq:6}, since $M^G_{2^n(1+t)}=M^{(1+t)G}_{2^n}$,  we obtain
\begin{align}
\label{eq:39}
\begin{split}
\Big\|\big(\sum_{n\in\mathbb Z} \sum_{m = 0}^{2^{l}-1}
|(M^G_{2^n+{2^{n-l}(m+1)}} &- M^G_{2^n+{2^{n-l}m}})S_{j+n}f|^2 \big)^{1/2}\Big\|_{L^q}\\
&\lesssim 2^l\sup_{t\in[0, 1]}\Big\|\big(\sum_{n\in\mathbb Z} 
|M^G_{2^n(1+t)}S_{j+n}f|^2 \big)^{1/2}\Big\|_{L^q}\\
&\lesssim 2^l\Big\|\big(\sum_{n\in\mathbb Z} 
|S_{j+n}f|^2 \big)^{1/2}\Big\|_{L^q}\\
&\lesssim 2^l\|f\|_{L^q}.
\end{split}
\end{align}
Interpolating \eqref{eq:38} with \eqref{eq:39} we obtain \eqref{eq:34} as desired.

\section{Discrete perspective:  proof of Theorem \ref{thm:3}}
\label{sec:6}
The main objective of this section is to provide dimensional-free estimates on $\ell^p(\ZZ^d)$, for $p\in(1, \infty]$, of the norm of the  maximal function corresponding to the operators $\mathcal M_N^{B^2}$ from \eqref{eq:85} with
large scales $N\ge Cd$ for some $C>0$, where $N>0$ is a real number.  The estimate in \eqref{eq:7} will be deduced by comparison of
$\sup_{N\ge Cd}|\mathcal M_N^{B^2}f|$ with its continuous analogue, for which we have dimension-free bounds
provided by the third author in \cite{SteinMax}.
Namely, we know that
for every $p\in(1,\infty)$ there is $C_p>0$ independent of the
dimension such that for every $f\in L^p(\RR^d)$ we have
        \begin{align}
\label{eq:8}
          \|M_*^{B^2}f\|_{L^p}\le C_p\|f\|_{L^p}.
          \end{align}
          
Throughout this section, unless otherwise stated, $N>0$ is always a real number and $Q=[-1/2,1/2]^d$ denotes the unit cube.  A fundamental role, in the proofs of this section, will be played by the  fact that the Euclidean norm corresponds to the scalar product $|x|^2=\langle x, x\rangle$. We begin with crude estimates for the number of lattice points the Euclidean balls $B^2_N$.

\begin{lemma}
	\label{lem:1}
Let $N>0$  and set $N_1=(N^2+d/4)^{1/2}$. Then
	\begin{equation}
\label{eq:9}
	|B^2_N\cap\ZZ^d|\le 2 |B^2_{N_1}|.
	\end{equation}
        Moreover, if $N\ge Cd$ for some fixed  $C>0$,
	then  we have
	\begin{equation}
	\label{eq:10}
	|B^2_N\cap\ZZ^d| \le 2e^{{1}/{(8C^2)}} |B^2_{N}|.
	\end{equation}
\end{lemma}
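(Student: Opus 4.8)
The plan is to realize the lattice-point count $|B^2_N\cap\ZZ^d|$ as a volume by the standard device of replacing each lattice point $k\in B^2_N\cap\ZZ^d$ by the unit cube $k+Q$ centered at it. Since these cubes are disjoint and each point $x\in k+Q$ satisfies $|x|\le |k|+|x-k|\le N+\tfrac{\sqrt d}{2}$, we get
\[
|B^2_N\cap\ZZ^d|=\Big|\bigcup_{k\in B^2_N\cap\ZZ^d}(k+Q)\Big|\le |B^2_{N+\sqrt d/2}|.
\]
This already gives a clean bound, but with $N+\sqrt d/2$ in place of the $N_1=(N^2+d/4)^{1/2}$ in \eqref{eq:9}; to land on $N_1$ instead I would argue more carefully, using the exact relation $|x|^2=\langle x,x\rangle=|k|^2+2\langle k,x-k\rangle+|x-k|^2$. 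Averaging over $x\in k+Q$, the cross term $\int_{k+Q}\langle k,x-k\rangle\,{\rm d}x$ vanishes by symmetry of $Q$, and $\int_{k+Q}|x-k|^2\,{\rm d}x=\sum_{j=1}^d\int_{-1/2}^{1/2}t^2\,{\rm d}t=d/12$. So on average over the cube $k+Q$ one has $|x|^2\le N^2+d/12\le N_1^2$, but since we need a pointwise statement to contain the cube in a ball, this averaging is not quite enough by itself.

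The way to fix this and get the factor $2$ in \eqref{eq:9} is a Chebyshev/Markov-type bound rather than an inclusion of sets: write
\[
|B^2_N\cap\ZZ^d|\le\sum_{k\in\ZZ^d}\ind{|k|\le N}
\le\sum_{k\in\ZZ^d}\int_{k+Q}\exp\!\big(\tfrac{1}{?}(N^2-\phi(x))\big)\,{\rm d}x
\]
for a suitable comparison; more concretely, for $k\in B^2_N\cap\ZZ^d$ and $x\in k+Q$ we have $|k|^2\le N^2$, hence $|x|^2=|k|^2+2\langle k,x-k\rangle+|x-k|^2\le N^2+d/4+2\langle k,x-k\rangle$ (using $|x-k|^2\le d/4$). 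The term $2\langle k,x-k\rangle$ integrates to zero over $k+Q$, so
\[
|B^2_N\cap\ZZ^d|=\sum_{k}\int_{k+Q}\ind{|k|\le N}\,{\rm d}x
\le\sum_{k}\int_{k+Q}\ind{|x|^2\le N_1^2+2\langle k,x-k\rangle}\,{\rm d}x.
\]
I would then split each cube into the half where $\langle k,x-k\rangle\le 0$ (which sits inside $B^2_{N_1}$) and pair it with the reflected half $k-(x-k)$; the reflection is measure-preserving and maps the ``bad'' half into $B^2_{N_1}$-friendly territory, producing at most the factor $2$. This pairing argument is exactly the place where the identity $|x|^2=\langle x,x\rangle$ is doing the essential work, as the paper's preamble to the lemma emphasizes.

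For \eqref{eq:10}, starting from \eqref{eq:9} I would simply compute the ratio of ball volumes: since $|B^2_r|=r^d|B^2_1|$,
\[
|B^2_N\cap\ZZ^d|\le 2|B^2_{N_1}|=2\Big(\frac{N_1}{N}\Big)^d|B^2_N|
=2\Big(1+\frac{d}{4N^2}\Big)^{d/2}|B^2_N|,
\]
and then, under the hypothesis $N\ge Cd$, bound $\big(1+\tfrac{d}{4N^2}\big)^{d/2}\le\exp\!\big(\tfrac{d}{2}\cdot\tfrac{d}{4N^2}\big)=\exp\!\big(\tfrac{d^2}{8N^2}\big)\le\exp\!\big(\tfrac{1}{8C^2}\big)$, using $1+t\le e^t$. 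This is routine. The only genuinely delicate point is getting the constant $2$ (rather than $2^{O(1)}$ or a $\sqrt d$-type loss) in \eqref{eq:9}; I expect the reflection/pairing trick above to be the crux, and if it turns out to be fiddly one can fall back on the cruder inclusion $B^2_N\cap\ZZ^d\hookrightarrow B^2_{N+\sqrt d/2}$ at the cost of slightly worsening the constant in \eqref{eq:10}, which would still suffice for the application to Theorem \ref{thm:3}.
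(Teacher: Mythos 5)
Your proposal is correct and follows essentially the same route as the paper: the identity $|x|^2=|k|^2+2\langle k,x-k\rangle+|x-k|^2\le N^2+d/4+2\langle k,x-k\rangle$ on $k+Q$, combined with the observation that the half of $k+Q$ where $\langle k,x-k\rangle\le 0$ (measure $\ge 1/2$) lies in $B^2_{N_1}$, gives $1\le 2|(k+Q)\cap B^2_{N_1}|$ and hence \eqref{eq:9} after summing over $k$ and using disjointness of the cubes; your ``reflection/pairing'' framing is just a slightly more roundabout way of stating this. The volume-ratio computation for \eqref{eq:10} is identical to the paper's.
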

\begin{proof}
For $x\in B^2_N$ and $z\in Q$ we have  
\[
|x+z|^2\le N^2+\frac{d}{4}+2\langle x, z\rangle.
\]
Moreover, for all $x\in B^2_N$ we have
\[
|\{z\in Q\colon \langle x, z\rangle \le 0\}|\ge \frac12.
\]
Hence
\begin{align*}
  |B^2_N\cap\ZZ^d|&=\sum_{x\in B^2_N\cap\ZZ^d}1\leq 2\sum_{x\in
                  B^2_N\cap\ZZ^d} \int_{Q}\ind{\{z\in Q\colon \langle x, z\rangle \le
                  0\}}(y){\rm d}y\\
                &\le 2\sum_{x\in B^2_N\cap\ZZ^d} \int_{Q}\ind{\{z\in Q\colon |x+z|\le N_1\}}(y){\rm d}y\\
                &\le 2\sum_{x\in \ZZ^d} \int_{Q}\ind{B^2_{N_1}}(x+y){\rm
                  d}y\\
  &=2\sum_{x\in \ZZ^d}\int_{x+Q}\ind{B^2_{N_1}}(y){\rm d}y=2|B^2_{N_1}|.
\end{align*}
This proves \eqref{eq:9}. For \eqref{eq:10} note that for $N\ge Cd$ we get
\begin{align*}
|B^2_{N_1}|=\frac{\pi^{d/2}N_1^d}{\Gamma(d/2+1)}=|B^2_N|\bigg(1+\frac{d}{4N^2}\bigg)^{d/2}
\le|B^2_N|\bigg(1+\frac{1}{4CN}\bigg)^{d/2}\le e^{{1}/{(8C^2)}}|B^2_N|,
\end{align*}
which proves \eqref{eq:10}.
\end{proof}

\begin{lemma}
	\label{lem:2}
Assume that $N\ge Cd$ for some fixed $C>0$ and let $t>0$.  Then for every $x\in \RR^d$
such that $|x|\ge N(1+t/N)^{1/2}$ we  have
	\begin{equation}
\label{eq:11}
|Q\cap(B^2_N-x)|=|\{y\in Q\colon x+y\in B^2_N\}|\le 2e^{-ct^2},
	\end{equation}
        where $c=\frac{7}{32}\frac{C^2}{(C+1)^2}$.
\end{lemma}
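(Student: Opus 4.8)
The plan is to exploit, just as in Lemma \ref{lem:1}, the identity $|x+y|^2 = |x|^2 + 2\langle x, y\rangle + |y|^2$ and to isolate the linear term $\langle x, y\rangle$, which is the only quantity that can be negative and hence shrink $|x+y|^2$ below $|x|^2$. First I would fix $x$ with $|x| \ge N(1+t/N)^{1/2}$, so that $|x|^2 \ge N^2 + tN$. For $y \in Q$ we have $|y|^2 \le d/4$, hence
\begin{align*}
|x+y|^2 \ge |x|^2 + 2\langle x, y\rangle \ge N^2 + tN + 2\langle x, y\rangle.
\end{align*}
Therefore if $x+y \in B^2_N$, i.e. $|x+y|^2 \le N^2$, then necessarily $2\langle x, y\rangle \le -tN$, that is $\langle x/|x|, y\rangle \le -tN/(2|x|)$. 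Since $|x| \le $ (something controlled) — more precisely, for the right comparison we should note that on the relevant set $|x|$ cannot be too large either, but actually for the upper bound it suffices to use $|x|^2 \ge N^2$ together with an upper bound; here I would simply observe that we only need to bound the measure of $\{y \in Q : \langle x/|x|, y\rangle \le -s\}$ with $s = tN/(2|x|)$, and then separately lower-bound $s$.

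The second step is the key probabilistic estimate: for any unit vector $u \in \Ss^{d-1}$ and any $s > 0$,
\begin{align*}
|\{y \in Q : \langle u, y\rangle \le -s\}| \le 2e^{-2s^2}.
\end{align*}
This follows from Hoeffding's inequality applied to the independent random variables $u_k y_k$ with $y_k$ uniform on $[-1/2,1/2]$: each is bounded in an interval of length $|u_k|$, so $\sum_k |u_k|^2 = 1$, and the sub-Gaussian tail gives $\Pr(\langle u, y\rangle \le -s) \le e^{-2s^2/\sum|u_k|^2} = e^{-2s^2}$; the factor $2$ absorbs the one-sided-to-two-sided passage and is in fact not even needed, but it is harmless. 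This is the only nontrivial analytic input, and I expect it to be the main obstacle in the sense that one must either quote Hoeffding or reprove the sub-Gaussian bound for bounded symmetric variables (the moment generating function bound $\EE e^{\lambda u_k y_k} \le e^{\lambda^2 u_k^2/8}$, optimized in $\lambda$).

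The final step is bookkeeping with the constants. From $|x|^2 \ge N^2 + tN$ we want a matching \emph{upper} bound on $|x|$ to convert $s = tN/(2|x|)$ into a lower bound. Here I would argue that we may assume $t \le N$ (otherwise $B^2_N - x$ lies outside $Q$ in a trivial way, or one handles large $t$ by monotonicity), so $|x|^2 \le N^2 + tN \le 2N^2$, giving $|x| \le \sqrt{2}\,N$ — but a cleaner route, matching the stated constant $c = \frac{7}{32}\frac{C^2}{(C+1)^2}$, is: for the regime of interest $|x|$ is comparable to $N$, specifically $|x|^2 \le N^2 + tN$ and if additionally $t \le N$ then $|x| \le N\sqrt{2}$; then $s = tN/(2|x|) \ge t/(2\sqrt 2)$, yielding a bound $2e^{-t^2/4}$, which is of the claimed form but with a suboptimal constant. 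To get exactly $c = \frac{7}{32}\frac{C^2}{(C+1)^2}$ one should instead, using $N \ge Cd \ge C$ and $|y|^2 \le d/4 \le N/(4C)$, refine the estimate $|x+y|^2 \ge N^2 + tN + 2\langle x,y\rangle$ to track that $|x| \le (N^2 + tN + d/4)^{1/2} \le N(1 + 1/C + 1/(4C^2))^{1/2} \le N(C+1)/C \cdot(\text{const})$ in the worst case $t \le N$, so that $s \ge \frac{tN}{2|x|} \ge \frac{t}{2}\cdot\frac{C}{C+1}\cdot(\text{const})$, and then $2e^{-2s^2}$ gives the exponent $\frac{7}{32}\frac{C^2}{(C+1)^2}t^2$. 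I would carry out this constant-chasing at the end; it is routine once the Hoeffding bound is in place, and the precise value of $c$ is immaterial for the applications of the lemma.
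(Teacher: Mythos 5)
Your plan follows the paper's own route: expand $\lvert x+y\rvert^2$ to isolate the cross term $\langle x,y\rangle$, observe that $x+y\in B^2_N$ together with $\lvert x\rvert^2\ge N^2+tN$ forces $\langle x/\lvert x\rvert,y\rangle$ to be very negative, and then close with a sub-Gaussian tail bound for $y$ uniform on $Q$. The paper proves exactly the one-dimensional Hoeffding/Chernoff estimate you invoke, arriving at $\lvert\{y\in Q:\langle z,y\rangle\ge s\}\rvert\le e^{-\frac78 s^2}$ by the moment-generating-function computation with the deliberately non-optimal choice $\alpha=s$ (your optimal $\alpha=4s$ would give $e^{-2s^2}$ and hence a slightly better $c$, so this difference is immaterial). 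The one concrete defect in your sketch is the upper bound on $\lvert x\rvert$ that you need in order to turn $s=tN/(2\lvert x\rvert)$ into a lower bound: you write ``$\lvert x\rvert^2\le N^2+tN$,'' but the hypothesis gives the \emph{reverse} inequality $\lvert x\rvert^2\ge N^2+tN$ and imposes no upper bound on $\lvert x\rvert$ at all. The correct (and simple) observation, which is what the paper uses, is that one may assume $\{y\in Q:x+y\in B^2_N\}$ is nonempty, whence $\lvert x\rvert\le \lvert x+y\rvert+\lvert y\rvert\le N+d^{1/2}$ for such a $y$; together with $N\ge Cd\ge Cd^{1/2}$ this yields $\lvert x\rvert\le N(C+1)/C$, so $s\ge \tfrac12\tfrac{Ct}{C+1}$, and plugging into $e^{-\frac78 s^2}$ (with the factor $2$ for the two-sided event $\lvert\langle\bar x,y\rangle\rvert\ge s$) gives exactly $c=\frac{7}{32}\frac{C^2}{(C+1)^2}$. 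You do anticipate that ``$\lvert x\rvert$ cannot be too large,'' which is the right instinct, but the route you gesture at (refining $\lvert x\rvert^2\le N^2+tN+d/4$) does not follow from the hypotheses; the nonemptiness / triangle-inequality step is the missing ingredient.
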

\begin{proof}
Let $|x|\ge N(1+t/N)^{1/2}.$ Then	for $y\in Q$ and $x+y\in B^2_N$ we have
	\begin{equation*}
	N^2+Nt\le|x|^2=|x+y-y|^2\le N^2-2\langle x, y\rangle-|y|^2\le N^2+2|\langle x, y\rangle|.
	\end{equation*}
	Thus for $\bar{x}=x/|x|$ one has
	\begin{equation*}
	|\langle \bar{x}, y\rangle|\ge \frac12 \frac{Nt}{|x|}\ge\frac12 \frac{Nt}{|x+y|+|y|}\ge \frac12 \frac{Nt}{N+d^{1/2}}\ge \frac12 \frac{Ct}{C+1},
	\end{equation*}
	and consequently  we get
        \begin{equation}
        \label{eq:12-}
        |\{y\in Q\colon x+y\in B^2_N\}|
        \le|\{y\in Q: |\langle \bar{x}, y\rangle|
        \ge Ct/(2C+2)\}|.
        \end{equation} 
        We claim that 
         for every unit vector $z\in\RR^d$ and
        for every $s>0$  we have
        \begin{align}
          \label{eq:12}
          |\{y\in Q: \langle z, y\rangle\ge
        s\}|\le e^{-\frac78s^2}. 
        \end{align}
        Taking $s=Ct/(2C+2)$ in \eqref{eq:12} and coming back to \eqref{eq:12-} we complete the proof of \eqref{eq:11} with $c=\frac{7}{32}\frac{C^2}{(C+1)^2}.$

        In the proof of \eqref{eq:12} we will appeal to the inequality $e^{x}+e^{-x}\le 2e^{\frac12
          x^2}$, which holds for all $x\ge0$. 
        Indeed, for every $\alpha>0$ we get  
        \begin{align*}
          e^{\alpha s}|\{y\in Q: \langle z, y\rangle\ge s\}|
          &\le \int_Qe^{\alpha
            \sum_{j=1}^dz_j y_j}{\rm d}y\\
          &=\prod_{j=1}^d\int_0^{1/2}e^{\alpha
            z_j y_j}+e^{-\alpha
            z_j y_j}{\rm d}y_j\\
          &\le\prod_{j=1}^d2\int_0^{1/2}e^{\frac12\alpha^2
            (z_j y_j)^2}{\rm d}y_j\\
          &\le\prod_{j=1}^de^{\frac18\alpha^2
          z_j^2}\\
          &=e^{\frac18\alpha^2
          \sum_{j=1}^dz_j^2}\\
          &= e^{\frac18\alpha^2}.
        \end{align*}
        Taking $\alpha=s$ in the inequality above and
        dividing by $e^{s^2}$ we obtain
        \eqref{eq:12} and the proof is completed.
\end{proof}

\begin{lemma}
	\label{lem:3}
There are constants $C_1, C_2>0$ such that for every $N\ge C_1d$ we have
	\begin{equation}
	\label{eq:13}
	|B^2_N|\le C_2|B^2_{N}\cap\ZZ^d|.
	\end{equation}
        Moreover,
       \eqref{eq:13} combined with \eqref{eq:10} from Lemma
       \ref{lem:1} yields 
	\begin{equation*}
	C_2^{-1}|B^2_N|\le |B^2_{N}\cap\ZZ^d|\le 2e^{{1}/{(8C_1^2)}} |B^2_{N}|
	\end{equation*}
	for every $N\ge C_1d$.

\end{lemma}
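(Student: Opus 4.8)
The plan is to estimate $|B^2_N|$ by a sum over lattice points, using the partition of $\RR^d$ into the unit cubes $x+Q$, $x\in\ZZ^d$. Writing $|B^2_N|=\sum_{x\in\ZZ^d}|(x+Q)\cap B^2_N|$ and splitting according to whether $|x|\le N$ or $|x|>N$, the first part is at most $\#\{x\in\ZZ^d:|x|\le N\}=|B^2_N\cap\ZZ^d|$ since each cube has volume one. Thus it suffices to choose $C_1$ large enough that
\[
\sum_{\substack{x\in\ZZ^d\\ |x|>N}}|(x+Q)\cap B^2_N|\le \tfrac12|B^2_N|
\]
whenever $N\ge C_1d$; indeed this gives $|B^2_N|\le|B^2_N\cap\ZZ^d|+\tfrac12|B^2_N|$, i.e.\ \eqref{eq:13} with $C_2=2$, and the two-sided estimate in the statement then follows by combining with \eqref{eq:10}.

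To control the tail I would first bound a single term. If $|x|>N$ and $z\in Q$ satisfy $|x+z|\le N$, then expanding $|x+z|^2=|x|^2+2\langle x,z\rangle+|z|^2$ forces $\langle x,z\rangle\le(N^2-|x|^2)/2$; writing $\bar x=x/|x|$ and $s_x=(|x|^2-N^2)/(2|x|)>0$ this reads $\langle\bar x,z\rangle\le -s_x$, so by the symmetry $z\mapsto -z$ and the sub-Gaussian bound \eqref{eq:12} from the proof of Lemma \ref{lem:2},
\[
|(x+Q)\cap B^2_N|\le\bigl|\{z\in Q:\langle\bar x,z\rangle\ge s_x\}\bigr|\le e^{-\frac78 s_x^2}\le e^{-\frac{7}{32}(|x|-N)^2},
\]
the last step using $s_x\ge(|x|-N)/2$. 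Grouping the lattice points with $|x|>N$ into the unit shells $N+k<|x|\le N+k+1$, $k\in\NN_0$, the tail is then at most $\sum_{k\ge0}e^{-\frac{7}{32}k^2}\,\#\{x\in\ZZ^d:N+k<|x|\le N+k+1\}$, so everything reduces to a dimension-free bound on the number of lattice points in a thin spherical shell of radius $\ge C_1d$.

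Here the crude estimate obtained by covering with cubes is too wasteful, since it fattens the shell by $\sqrt d$; I would instead use the scalar-product structure once more. For $M\ge C_1d$ and $x$ with $M<|x|\le M+1$, inequality \eqref{eq:12} shows that all but a fraction $\le 2e^{-7/2}<\tfrac1{10}$ of $z\in Q$ satisfy $|\langle\bar x,z\rangle|\le2$, and for such $z$ the identity $|x+z|^2=|x|^2+2|x|\langle\bar x,z\rangle+|z|^2$ together with $|z|^2\le d/4\le M$ gives $M-3<|x+z|<M+4$; since the cubes $x+Q$ are pairwise disjoint this yields
\[
\tfrac{9}{10}\,\#\{x\in\ZZ^d:M<|x|\le M+1\}\le |B^2_{M+4}|-|B^2_{M-3}|\le\frac{C}{C_1}\,|B^2_M|,
\]
the last inequality because $|B^2_{M\pm r}|=|B^2_M|(1\pm r/M)^d$ and $d/M\le 1/C_1$.

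Finally, applying this shell estimate with $M=N+k$ and using $|B^2_{N+k}|\le|B^2_N|e^{k/C_1}$, the tail is dominated by
\[
\frac{C'}{C_1}\,|B^2_N|\sum_{k\ge0}e^{-\frac{7}{32}k^2+k/C_1}\le\frac{C''}{C_1}\,|B^2_N|,
\]
since $\sum_{k\ge0}e^{-\frac{7}{32}k^2+k}$ is a finite absolute constant; taking $C_1$ sufficiently large makes this $\le\tfrac12|B^2_N|$, which completes the proof. The only genuinely delicate point is the shell estimate above: the argument works precisely because, thanks to $|x|^2=\langle x,x\rangle$, the boundary layer responsible for the gap between $|B^2_N|$ and $|B^2_N\cap\ZZ^d|$ can be captured inside a shell of $O(1)$ — rather than $O(\sqrt d)$ — width, whose volume is a fraction of $|B^2_N|$ that tends to $0$ as $C_1\to\infty$.
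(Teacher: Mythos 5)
Your proof is correct, and while it shares the paper's basic machinery (tiling $\RR^d$ by the unit cubes $x+Q$ and exploiting the sub-Gaussian concentration of $\langle \bar{x},\cdot\rangle$ on $Q$ from \eqref{eq:12}), it is organized quite differently. The paper first proves the intermediate inequality $|B^2_M|\le 2|B^2_{M(1+J/M)^{1/2}}\cap\ZZ^d|$, absorbing the $J$ innermost shells into the lattice count of the \emph{dilated} ball without any decay estimate, bounding the lattice points of each far shell $U_j$ by $|B^2_{M(1+(j+1)/M)^{1/2}}\cap\ZZ^d|$ and converting back to volume via Lemma \ref{lem:1}; it then undoes the dilation by re-parametrizing $N=M(1+J/M)^{1/2}$, which is what produces $C_1=2(1+J)$ and $C_2=2e^J$. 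You avoid the dilation entirely by proving a genuinely new quantitative ingredient: that a unit-width annulus at radius $M\ge C_1 d$ contains at most $\frac{C}{C_1}|B^2_M|$ lattice points, obtained by observing that at least $9/10$ of each cube $x+Q$ centered in the shell lies in an annulus of width $O(1)$ (rather than $O(\sqrt d)$), whose volume is an $O(1/C_1)$ fraction of the ball. This sharper shell count lets you run the Gaussian-decay summation over \emph{all} shells, including the innermost one $k=0$, and yields the cleaner constant $C_2=2$ at the cost of a large $C_1$; the paper's route trades a cruder (exponentially growing) shell bound for the re-parametrization trick. The only points worth making explicit in a write-up are the lower bound $|x+z|>M-3$, which needs $M\ge 7$ (harmless since $M\ge C_1 d\ge C_1$), and the elementary inequality $s_x=(|x|^2-N^2)/(2|x|)\ge(|x|-N)/2$, both of which you use correctly.
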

\begin{proof}
We show that there is $J\in\NN$ such that for every $M\ge d$ we have
\begin{align}
  \label{eq:18}
  |B^2_M|\le 2|B^2_{M(1+J/M)^{1/2}}\cap\ZZ^d|.
\end{align}
Assume momentarily that \eqref{eq:18} is proven, then \eqref{eq:13}
follows. Indeed, for every $N\ge C_1d$, where $C_1=2(1+J)$ we find $M\ge d$ such that
$N=M(1+J/M)^{1/2}$, hence, \eqref{eq:18} implies
\begin{align}
  \label{eq:19}
  |B^2_M|\le 2|B^2_{N}\cap\ZZ^d|.
\end{align}
On the other hand we have
\[
|B^2_M|\le |B^2_N|\le (1+J/M)^{d/2}|B^2_M|\le e^{J}|B^2_M|,
\]
since $M\ge d$. This estimate combined with \eqref{eq:19} gives
\eqref{eq:13} with $C_2=2e^J$.

Our aim now is to prove \eqref{eq:18}. For this purpose let $J\in\NN$ be a large number such that
\begin{align*}
\sum_{j\ge J}e^{-j^2/32}e^{j}\le \frac{1}{8e}.
\end{align*}
Define  $U_j=\big\{x\in\RR^d: M\big(1+\frac{j}{M}\big)^{1/2}< |x|\leq
M\big(1+\frac{(j+1)}{M}\big)^{1/2}\big\}$ and observe that
\begin{align}
  \label{eq:15}
  \begin{split}
  |B^2_M|
  &=\sum_{x\in\ZZ^d}\int_{x+Q}\ind{B^2_M}(y){\rm d}y\\
  &=\sum_{x\in\ZZ^d}\int_{Q}\ind{B^2_M}(x+y){\rm d}y\\
&\le\sum_{x\in B^2_M\cap\ZZ^d}\int_{Q}\ind{B^2_M}(x+y){\rm
  d}y+\sum_{j\ge0}\sum_{x\in U_j\cap\ZZ^d}\int_{Q}\ind{B^2_M}(x+y){\rm d}y\\
&\le |B^2_M\cap\ZZ^d|+\sum_{0\le j<J}\sum_{x\in U_j\cap\ZZ^d}|Q\cap
(B^2_M-x)|+\sum_{j\ge J}\sum_{x\in U_j\cap\ZZ^d}|Q\cap (B^2_M-x)|\\
&\le |B^2_{M(1+J/M)^{1/2}}\cap\ZZ^d|+\sum_{j\ge J}\sum_{x\in U_j\cap\ZZ^d}|Q\cap (B^2_M-x)|.
  \end{split}
\end{align}

By \eqref{eq:10}, since $M\ge d$, we get 
\begin{align*}
  |B^2_{M(1+(j+1)/M)^{1/2}}\cap\ZZ^d|&\le2e^{1/8}|B^2_{M(1+(j+1)/M)^{1/2}}|\\
  &\le 2e^{1/8}\bigg(1+\frac{j+1}{d}\bigg)^{d/2}|B^2_M|\\
  &\le 2e^{1/8} e^{(j+1)/2}|B^2_M|.
\end{align*}
Using this estimate, the definition of the sets $U_j$ and Lemma \ref{lem:2} we
obtain for any $M\ge d$ that
\begin{align}
  \label{eq:16}
  \begin{split}
  \sum_{j\ge J}\sum_{x\in U_j\cap\ZZ^d}|Q\cap (B^2_M-x)|
  &\le 2\sum_{j\ge J}e^{-j^2/32}|B^2_{M(1+(j+1)/M)^{1/2}}\cap\ZZ^d|\\
  &\le 4e^{5/8}|B^2_M|\sum_{j\ge J}e^{-j^2/32}e^{j}\\
  &\le \frac{1}{2}|B^2_M|.
  \end{split}
\end{align}
Combining \eqref{eq:16} with \eqref{eq:15} we obtain
\eqref{eq:18} as desired. This completes the proof of Lemma \ref{lem:3}.
\end{proof}

We  now are ready to prove Theorem \ref{thm:3}.
\begin{proof}[Proof of Theorem \ref{thm:3}]
	Let $f: \ZZ^d\to \CC$ and define its extension $F: \RR^d\to
        \CC$ on $\RR^d$ by setting
        \[
        F(x)=\sum_{y\in\ZZ^d}f(y)\ind{y+Q}(x).
        \]
	Then, clearly $\|F\|_{L^p(\RR^d)}=\|f\|_{\ell^p(\ZZ^d)}$ for
        every $p\ge1$.

        From now on we assume that $f\ge0$.
        For every $N\ge C_1d$, with $C_1$ as in Lemma
        \ref{lem:3}, we define  $N_1=(N^2+d/4)^{1/2}$. 
        Observe that for $z\in Q$ and $y\in B^2_{N}$ we have
        \[
|y+z|^2= |y|^2+|z|^2+2\langle z, y\rangle\le N_1^2
\]
on the set $\{z\in Q\colon \langle z, y\rangle\le 0\}$, which has measure $1/2$.
        Then by Lemma \ref{lem:3}  for all
        $x\in\ZZ^d$ we obtain
        \begin{align}
        \label{eq:63}
\begin{split}
\mathcal M_N^{B^2}f(x)&
=\frac1{|B^2_N\cap\ZZ^d|}\sum_{y\in B^2_N\cap\ZZ^d}f(x+y)\ind{B^2_N}(y)\\
&\lesssim \frac1{|B^2_N|}\sum_{y\in\ZZ^d}f(x+y)\int_{Q}\ind{B^2_{N_1}}(y+z){\rm d}z\\
&=\frac1{|B^2_N|}\sum_{y\in\ZZ^d}f(y)\int_{x+B^2_{N_1}}\ind{y+Q}(z){\rm d}z\\
&=\frac1{|B^2_N|}\int_{x+B^2_{N_1}}F(z){\rm d}z\\
&=\bigg(\frac{N_1}{N}\bigg)^d\frac{1}{|B^2_{N_1}|}\int_{B^2_{N_1}}F(x+z){\rm d}z\\
&\lesssim\frac{1}{|B^2_{N_1}|}\int_{B^2_{N_1}}F(x+z){\rm d}z\\
&=M_{N_1}^{B^2}F(x).        
\end{split}
\end{align}	
Finally, take $N_2=(N_1^2+d/4)^{1/2}.$
Similarly as above, for  $y\in Q$ and $z\in B^2_{N_1}$ we have 
$$|y+z|^2\le |y|^2+|z|^2+2\langle z, y\rangle\le N_2^2$$
on the set $\{y\in Q\colon \langle z, y\rangle\le 0\}$, which has Lebesgue measure $1/2$. 
Therefore, Fubini's theorem leads to
\begin{align}
\label{eq:64}
\begin{split}
M_{N_1}^{B^2}F(x)&=\frac{1}{|B^2_{N_1}|}\int_{B^2_{N_1}}F(x+z){\rm d}z\\
&\leq \frac2{|B^2_{N_1}|}
\int_{\RR^d}F(x+z)\ind{B^2_{N_1}}(z)\int_{Q}\ind{B^2_{N_2}}(z+y){\rm d}y{\rm d}z\\
&\lesssim \frac1{|B^2_{N_2}|}\int_{Q}\int_{\RR^d}F(x+z-y)\ind{B^2_{N_2}}(z){\rm d}z{\rm d}y\\
&= \int_{x+Q}M_{N_2}^{B^2}F(y){\rm d}y.
\end{split}
	\end{align}
        Combining \eqref{eq:63} with \eqref{eq:64}, applying H\"older's inequality, and invoking \eqref{eq:8} we arrive at
	\begin{align*}
          \big\|\sup_{N\ge C_1 d}|\mathcal M_N^{B^2}f|\big\|_{\ell^p(\ZZ^d)}^p
          &\lesssim\sum_{x\in\ZZ^d}\int_{x+Q}\big|\sup_{N\ge
            C_1 d}M_N^{B^2}F(y)\big|^p{\rm d}y\\
          &=\big\|\sup_{N\ge C_1 d}M_N^{B^2}F\big\|_{L^p(\RR^d)}^p\\
          &\lesssim\|F\|_{L^p(\RR^d)}^p\\
          &=\|f\|_{\ell^p(\ZZ^d)}^p.
        \end{align*}
        This proves Theorem \ref{thm:3} with $C=C_1.$ 
\end{proof}

\end{document}